\title{Double domination and total $2$-domination in digraphs and their dual problems}
\author{Doost Ali Mojdeh$^1$ and Babak Samadi$^2$\\[0.5cm]
Department of Mathematics, University of Mazandaran,\\ Babolsar, Iran$^{1,2}$\\
{\it damojdeh@umz.ac.ir$^1$, } {\it samadibabak62@gmail.com$^2$}\\[0.2cm]
}
\date{}
\newtheorem{theorem}{Theorem}[section]
\newtheorem{corollary}[theorem]{Corollary}
\newtheorem{proposition}[theorem]{Proposition}
\theoremstyle{definition}
\newtheorem{definition}[theorem]{Definition}
\theoremstyle{remark}
\begin{document}

\maketitle

\begin{abstract}

\noindent \ \ \
A subset $S$ of vertices of a digraph $D$ is a double dominating set (total $2$-dominating set) if every vertex not in $S$ is adjacent from at least two vertices in $S$, and every vertex in $S$ is adjacent from at least one vertex in $S$ (the subdigraph induced by $S$ has no isolated vertices). The double domination number (total $2$-domination number) of a digraph $D$ is the minimum cardinality of a double dominating set (total $2$-dominating set) in $D$. In this work, we investigate these concepts which can be considered as two extensions of double domination in graphs to digraphs, along with the concepts $2$-limited packing and total $2$-limited packing which have close relationships with the above-mentioned concepts.
\end{abstract}
{\bf Keywords:} Double domination number, total $2$-domination number, limited packing number, total $2$-limited packing number, directed tree.\vspace{1mm}\\
{\bf MSC 2010:} 05C20, 05C69.


\section{Introduction and preliminaries}

Throughout this paper, we consider $D=(V(D),A(D))$ as a finite digraph with vertex set $V=V(D)$ and arc set $A=A(D)$ with neither loops nor multiple arcs (although pairs of opposite arcs are allowed). Also, $G=(V(G),E(G))$ stands for a simple finite graph with vertex set $V(G)$ and edge set $E(G)$. We use \cite{bg} and \cite{we} as references for basic terminology and notation in digraphs and graphs, respectively, which are not defined here.

For any two vertices $u,v\in V(D)$, we write $(u,v)$ as the \emph{arc} with direction from $u$ to $v$, and say $u$ is \emph{adjacent to} $v$, or $v$ is \emph{adjacent from} $u$. We also say $u$ and $v$ are {\em adjacent with} each other. Given a subset $S$ of vertices of $D$ and a vertex $v\in V(D)$, the {\em in-neighborhood} of $v$ from $S$ ({\em out-neighborhood} of $v$ to $S$) is $N_S^{-}(v)=\{u\in S\mid(u,v)\in A(D)\}$ ($N_S^{+}(v)=\{u\in S\mid(v,u)\in A(D)\}$). The \emph{in-degree} of $v$ from $S$ is $deg_S^-(v)=|N_S^{-}(v)|$ and the \emph{out-degree} of $v$ to $S$ is $deg_S^+(v)=|N_S^{+}(v)|$. Moreover, $N_S^{-}[v]=N_{S}^{-}(v)\cup\{v\}$ ($N_S^{+}[v]=N_{S}^{+}(v)\cup\{v\}$) is the {\em closed in-neighborhood} ({\em closed out-neighborhood}) of $v$ from (to) $S$. In particular, if $S=V(D)$, then we simply say (closed) (in or out)-neighborhood and (in or out)-degree, and write $N_D^{-}(v)$, $N_D^{+}(v)$, $N_D^{-}[v]$, $N_D^{+}[v]$, $deg_D^-(v)$ and $deg_D^+(v)$ instead of $N_{V(D)}^{-}(v)$, $N_{V(D)}^{+}(v)$, $N_{V(D)}^{-}[v]$, $N_{V(D)}^{+}[v]$, $deg_{V(D)}^-(v)$ and $deg_{V(D)}^+(v)$, respectively (we moreover remove the subscripts $D$, $V(D)$ if there is no ambiguity with respect to the digraph $D$). For a graph $G$, $\Delta=\Delta(G)$ and $\delta=\delta(G)$ represent the maximum and minimum degrees of $G$. In addition, for a digraph $D$, ($\Delta^{+}=\Delta^+(D)$ and $\delta^{+}=\delta^+(D)$) $\Delta^{-}=\Delta^-(D)$ and $\delta^{-}=\delta^-(D)$ represent the maximum and minimum (out-degrees) in-degrees of $D$. Given two subsets $A$ and $B$ of vertices of $D$, by $(A,B)_D$ we mean the sets of arcs of $D$ going from $A$ to $B$. Finally, we let $N[v]=N^{-}[v]\cup N^{+}[v]$ for each vertex $v$ of a digraph $D$.

We denote the {\em converse} of a digraph $D$ by $D^{-1}$, obtained by reversing the direction of every arc of $D$. A vertex $v\in V(D)$ with $deg^{+}(v)+deg^{-}(v)=1$ is called an {\em end-vertex}. A {\em penultimate vertex} is a vertex adjacent with an end-vertex. A digraph $D$ is {\em connected} if its underlying graph is connected. A {\it directed tree} is a digraph in which its underlying graph is a tree. A digraph $D$ is said to be {\em functional} ({\em contrafunctional}) if every vertex of $D$ has out-degree (in-degree) one. For more information about this subject the reader can consult \cite{h} (\cite{hnc}).

A vertex $v\in V(D)$ ($v\in V(G)$) is said to dominate itself and its out-neighbors (neighbors). A subset $S\subseteq V(D)$ ($S\subseteq V(G)$) is a {\em dominating set} in $D$ ($G$) if all vertices are dominated by the vertices in $S$. The {\em domination number} $\gamma(D)$ ($\gamma(G)$) is the minimum cardinality of a dominating set in the digraph $D$ (graph $G$). The concept of domination in digraphs was introduced by Fu \cite{fu} and has been extensively studied in several papers including \cite{chy,hq,l1,msy}, while the very well-known same topic in graphs was intdoduced by Berge \cite{b} and Ore \cite{O}. The reader is referred to \cite{hhs2} for more details on this topic. Ouldrabah et al. \cite{obb} defined the concept of $k$-domination in digraphs, as a transformation of the same topic in graphs (see \cite{fj1} and \cite{fj2}), as follows. A subset $S$ of vertices in a digraph $D$ is a $k$-dominating set if $|N^{-}(v)\cap S|\geq k$ for every vertex $v$ in $V(D)\setminus S$. The $k$-domination number of $D$, denoted by $\gamma_{k}(D)$, is the minimum cardinality of a $k$-dominating set in $D$. Clearly, this concept is a generalization of the concept of domination in digraphs.

As digraphs are extensions graphs (note that a graph can be considered as a symmetric digraph), we can expect that a well-known concept in graph theory can be extended to digraph theory in different ways. For example, Arumugam et al. \cite{ajv} investigated two extensions of the {\em total domination} (in graphs) to digraphs in two different ways, namely, {\em open domination} and {\em total domination} in digraphs.

The {\em $k$-tuple domination number} $\gamma_{\times k}(G)$ of a graph $G$ with $\delta(G)\geq k-1$ is the minimum cardinality of a subset $S\subseteq V(G)$ such that $|N[v]\cap S|\geq k$, for each vertex $v\in V(G)$. In particular, the $2$-tuple domination number is called a {\em double domination number}. This concept was first introduced by Harary and Haynes in \cite{hh}. Gallant et al. \cite{gghr} introduced the concept of limited packing in graphs as follows. The {\em $k$-limited packing number} $L_{k}(G)$ of a graph $G$ is the maximum cardinality of a subset $B\subseteq V(G)$ such that $|N[v]\cap B|\leq k$, for each vertex $v\in V(G)$. Note that $L_{1}(G)=\rho(G)$ is the well-known {\em packing number} in the graph $G$.

The concept of double domination in graphs can be extended to digraphs in two different ways.

\begin{definition}\label{def1}
Let $D$ be a digraph with $\delta^{-}(D)\geq1$. A subset $S\subseteq V(D)$ is a {\em double dominating set} in $D$ if every vertex is dominated by at least two vertices in $S$. The {\em double domination number} $\gamma_{\times2}(D)$ is the minimum cardinality of a double dominating set in $D$.
\end{definition}

\begin{definition}\label{def2}
Let $D$ be a digraph with no isolated vertices. A subset $S\subseteq V(D)$ is a {\em total $2$-dominating set} in $D$ if $D\langle S\rangle$ has no isolated vertices and every vertex in $V(D)\setminus S$ is dominated by at least two vertices in $S$. The {\em total $2$-domination number} $\gamma^{t}_{\times2}(D)$ is the minimum cardinality of a total $2$-dominating set in $D$.
\end{definition}

We remark that the definition of total $2$-dominating sets is more general than that of double dominating sets, as every double dominating set is a total $2$-dominating set (we have $\gamma(D)\leq \gamma^{t}_{\times2}(D)\leq \gamma_{\times2}(D)$ for all digraphs $D$ with $\delta^{-}(D)\geq1$). In fact, Definition \ref{def1} is more restricted than Definition \ref{def2} so that the first one cannot be used for some important families of digraphs like acyclic digraphs (the digraphs with no directed cycle), especially directed trees.

Regarding the $2$-limited packing in graphs, we can extend this concept to digraphs in two different ways.

\begin{definition}\label{def3}
A subset $B\subseteq V(D)$ is a {\em $2$-limited packing} in the digraph $D$ if $|N^{+}[v]\cap B|\leq2$, for every vertex $v\in V(D)$. The {\em $2$-limited packing number} $L_{2}(D)$ is the maximum cardinality of a $2$-limited packing in $D$.
\end{definition}

\begin{definition}\label{def4}
A subset $B\subseteq V(D)$ is a {\em total $2$-limited packing} in the digraph $D$ if every vertex in $B$ is adjacent with at most one vertex in $B$ and every vertex in $V(D)\setminus B$ is adjacent to at most two vertices in $B$. The {\em total $2$-limited packing number} $L^{t}_{2}(D)$ is the maximum cardinality of a total $2$-limited packing in $D$.
\end{definition}
Note that $L_{1}(D)=\rho(D)$ is the {\em packing number} of the digraph $D$. Comparing the last two definitions we can readily observe that $\rho(D)\leq L_{2}^{t}(D)\leq L_{2}(D)$, for all digraphs $D$.

Note that, for various reasons, the small values of $k$ (especially $k\in\{1,2\}$) regarding the above-mentioned parameters  have attracted more attention from the experts in domination theory rather than the large ones. One reason is that for the large values of $k$ we lose some important families of graphs (for example, the $k$-tuple domination number cannot be studied for trees when $k\geq3$), or we deal with a trivial problem (for example, for every graph $G$ with $k>\Delta(G)$, we have $L_{k}(G)=|V(G)|$). Another reason is that many results for the case $k\in\{1,2\}$ can be easily generalized to the general case $k$. Moreover, one may obtain stronger results for the small values of $k$ rather than the large ones. For more evidences on these pieces of information the reader can be referred to \cite{bbg}, \cite{cg} and \cite{hh}.

In this paper, we initiate the investigation of the parameters given in Definitions \ref{def1}-\ref{def4}. We derive their computational complexity and give several lower and upper bound on these parameters. We show that the problems given in Definition \ref{def1} (Definition \ref{def2}) and Definition \ref{def3} (Definition \ref{def4}) are the dual problems (on the instances of directed trees). In dealing with the total $2$-domination number and the total $2$-limited packing number our main emphasis is on directed trees, by which we prove that  $L^{t}_{2}(D)+L^{t}_{2}(D^{-1})$ can be bounded from above by $16n/9$ for any digraph $D$ of order $n$. Also, we bound the total $2$-domination number of a directed tree from below and characterize the directed trees attaining the bound.


\section{Computational complexity}

We first consider the following two well-known decision problems in domination theory.

\begin{equation}\label{comp1}
\begin{tabular}{|l|}
  \hline
  \mbox{DOMINATING SET PROBLEM}\\
  \mbox{INSTANCE: A graph $G$ and a positive integer $k$.}\\
  \mbox{QUESTION: Is $\gamma(G)\leq k$?}\\
  \hline
\end{tabular}
\end{equation}

\begin{equation}\label{comp2}
\begin{tabular}{|l|}
  \hline
  \mbox{PACKING PROBLEM}\\
  \mbox{INSTANCE: A graph $G$ and a positive integer $k$.}\\
  \mbox{QUESTION: Is $\rho(G)\geq k$?}\\
  \hline
\end{tabular}
\end{equation}

We make use of these two problems which are known to be NP-complete from \cite{hhs2} and \cite{packing}, respectively, in order to study the complexity of the problems introduced in this paper. Indeed. we deal with the following decision problems.

\begin{equation}\label{comp3}
\begin{tabular}{|l|}
  \hline
  \mbox{(TOTAL $2$-) DOUBLE DOMINATING SET PROBLEM}\\
  \mbox{INSTANCE: A digraph $D$ (with minimum in-degree at least one) with no isolated}\\ \mbox{vertices, and a positive integer $k$.}\\
  \mbox{QUESTION: Is ($\gamma^{t}_{\times2}(D)\leq k$?) $\gamma_{\times2}(D)\leq k$?}\\
  \hline
\end{tabular}
\end{equation}

\begin{equation}\label{comp4}
\begin{tabular}{|l|}
  \hline
  \mbox{(TOTAL $2$-) $2$-LIMITED PACKING PROBLEM}\\
  \mbox{INSTANCE: A digraph $D$ and a positive integer $k$.}\\
  \mbox{QUESTION: Is ($L^{t}_{2}(D)\geq k$?) $L_{2}(D)\geq k$?}\\
  \hline
\end{tabular}
\end{equation}

We next present NP-completeness results for the digraph problems listed above. Recall first that for a graph $G$, the {\em complete biorientation} $\overleftrightarrow{G}$ of $G$ is a digraph $D$ obtained from $G$ by replacing each edge $xy\in E(G)$ by the pair of arcs $(x,y)$ and $(y,x)$.

\begin{theorem}\label{complexity}
The problems given in the rectangles \emph{(\ref{comp3})} and \emph{(\ref{comp4})} are NP-complete.
\end{theorem}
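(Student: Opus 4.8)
The plan is to settle membership in NP first and then give two polynomial reductions: from DOMINATING SET to the two domination problems in (\ref{comp3}), and from PACKING to the two packing problems in (\ref{comp4}). Membership is routine, since a candidate set can be checked against the defining conditions in polynomial time: for a proposed $S$ one verifies that $|N^-[v]\cap S|\ge2$ for every vertex (the condition for $\gamma_{\times2}$), or that $D\langle S\rangle$ has no isolated vertex together with $|N^-[v]\cap S|\ge2$ for every $v\notin S$ (the condition for $\gamma^t_{\times2}$); for a proposed $B$ one verifies the corresponding upper-bound inequalities on $N^+[v]$, or on $N[v]$ and $N^+(v)$ as in Definition~\ref{def4}; one then compares the cardinality with $k$. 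Hence all four problems lie in NP.

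For the domination problems I would fix a graph $G$ on $n$ vertices and build $D$ from the complete biorientation $\overleftrightarrow{G}$, in which $N^-[v]=N_G[v]$ for each $v\in V(G)$, by attaching to every $v$ a two-vertex gadget $\{z_v,\ell_v\}$ with the three arcs $(z_v,v)$, $(z_v,\ell_v)$ and $(\ell_v,z_v)$. This gadget is forcing: since $N^-[\ell_v]=\{\ell_v,z_v\}$ and $N^-[z_v]=\{z_v,\ell_v\}$, every double dominating set must contain both $z_v$ and $\ell_v$; the same conclusion holds for total $2$-domination, because the unique in-neighbour $z_v$ of $\ell_v$ forces $\ell_v\in S$, after which non-isolation of $\ell_v$ in $D\langle S\rangle$ forces $z_v\in S$. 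Once all $z_v$ are present the arc $(z_v,v)$ supplies one of the two required in-neighbours of $v$, so $S$ double dominates (equivalently, total $2$-dominates) $D$ if and only if $S\supseteq\{z_v,\ell_v:v\in V(G)\}$ and $S\cap V(G)$ is an ordinary dominating set of $G$. Checking both directions then gives $\gamma_{\times2}(D)=\gamma^t_{\times2}(D)=\gamma(G)+2n$, so this single construction reduces DOMINATING SET to \emph{both} problems in (\ref{comp3}) with threshold $k+2n$; one also notes $\delta^-(D)\ge1$ and that $D$ has no isolated vertex, as the instances demand.

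For the packing problems the guiding idea is dual: again take $\overleftrightarrow{G}$, where $N^+[v]=N_G[v]$ and a $2$-limited packing of $\overleftrightarrow{G}$ is exactly one of $G$, and attach to every $v$ a small gadget whose role is to permanently occupy one of the two available units in $N^+[v]$, so that the surviving constraint $|N^+[v]\cap B|\le2$ degenerates to the packing constraint $|N_G[v]\cap B|\le1$ on $B\cap V(G)$. If the gadget vertices can be forced into every maximum $2$-limited packing, this gives an identity of the form $L_2(D)=\rho(G)+c$, and an analogous bookkeeping using the induced-degree condition and the $N^+(v)$-condition of Definition~\ref{def4} handles $L_2^t$; PACKING then reduces by the obvious threshold translation.

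The hard part will be exactly this forcing step for packings, and it is genuinely more delicate than in the domination case. Because the packing constraints are upper bounds, a gadget vertex cannot be \emph{required}: a naive single pendant out-sink $x_v$ added via $(v,x_v)$ fails, since a maximum packing may prefer to fill the two units of $N^+[v]$ with two genuine vertices of $N_G[v]$ rather than with $x_v$, and this already occurs at high-degree vertices (for a star $K_{1,m}$ the resulting packing is far larger than $\rho(G)+c$). The crux is therefore to design the attached structure so that omitting the gadget vertex is never profitable, equivalently so that two vertices of $V(G)$ can never occupy a common closed out-neighbourhood in a maximum packing. I expect this to require either enlarging the gadget with enough private vertices to make its omission strictly lossy, or running the reduction from PACKING restricted to instances of bounded maximum degree, where the budget ``$2$'' cannot be absorbed; in either case establishing the clean identity $L_2(D)=\rho(G)+c$ (and its total analogue for $L_2^t$) rigorously is where essentially all of the work in the packing reductions lies.
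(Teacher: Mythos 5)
Your treatment of the domination problems in (\ref{comp3}) is correct and is essentially the paper's argument: the same two-vertex gadget $\{z_v,\ell_v\}$ with arcs $(z_v,v),(z_v,\ell_v),(\ell_v,z_v)$ appears there (as $u_i,w_i$), and your identity $\gamma_{\times2}(D)=\gamma^{t}_{\times2}(D)=\gamma(G)+2n$ matches the paper's $\gamma_{\times2}(D')=\gamma^{t}_{\times2}(D')=2n+\gamma(D)$ after composing with the biorientation step. The problem is the packing half of (\ref{comp4}), which you leave unproved: you name the right construction (a single pendant out-sink $x_v$ attached by the arc $(v,x_v)$ to each vertex of $\overleftrightarrow{G}$), then reject it and defer the ``forcing step'' to an unspecified larger gadget or a bounded-degree restriction. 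That is a genuine gap, and moreover your reason for rejecting the pendant construction is wrong. For $G=K_{1,m}$ one has $\rho(G)=1$, the gadget adds $n=m+1$ pendants, and a short case check shows $L_{2}(D)=L_{2}^{t}(D)=m+2=\rho(G)+n$ exactly: the out-closed neighbourhood of the centre can absorb two original vertices only at the cost of excluding its own pendant $x_c$, so nothing is gained. There is no blow-up at high-degree vertices.

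The idea you are missing is that the gadget vertices need not lie in \emph{every} maximum $2$-limited packing; it suffices that \emph{some} maximum one contains them all, and this follows from an exchange argument (this is exactly what the paper does). If $x_i\notin B''$ and $v_i\notin B''$, then by maximality $v_i$ must already have two out-neighbours in $B''$ (otherwise $B''\cup\{x_i\}$ would be a larger packing), and swapping one of them for $x_i$ preserves both the cardinality and the packing conditions, since $x_i$ only enters the constraints of $N^{+}[v_i]$ and $N^{+}[x_i]=\{x_i\}$; if instead $v_i\in B''$, swap $v_i$ itself for $x_i$. Once every $x_i$ lies in $B''$, the pendant occupies one of the two units of $N^{+}[v_i]$ for each $i$, so $B''\cap V(G)$ satisfies $|N^{+}[v_i]\cap B''\cap V(G)|\leq1$ and is therefore an ordinary packing of $\overleftrightarrow{G}$, i.e.\ of $G$; this gives $L_{2}(D)=L_{2}^{t}(D)=n+\rho(G)$ (the total version needs the same swaps checked against the two conditions of Definition~\ref{def4}, which go through because each $x_i$ is adjacent with only $v_i$). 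Without this step your reduction for (\ref{comp4}) is not a proof, and the alternatives you gesture at are not needed.
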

\begin{proof}
The problems are clearly in NP since checking that a given set is indeed a double dominating set, total $2$-dominating set, $2$-limited packing, or total $2$-limited packing can be done in polynomial time.

Let $G$ be a graph. We then consider the complete biorientation $\overleftrightarrow{G}$ of $G$. It is easy to check that a set $S\subseteq V(G)$ is a dominating set in $G$ if and only if $S\subseteq V(\overleftrightarrow{G})$ is a dominating set in $\overleftrightarrow{G}$. This shows that $\gamma(G)=\gamma(\overleftrightarrow{G})$. We moreover have $\rho(G)=\rho(\overleftrightarrow{G})$, by a similar fashion. We now deduce from the problems (\ref{comp1}) and (\ref{comp2}) that the corresponding problems in digraphs are NP-complete.

We begin with a digraph $D$ with $V(D)=\{v_{1},\cdots,v_{n}\}$. For each $1\leq i\leq n$, we add new vertices $w_{i}$ and $u_{i}$, and arcs $(w_{i},u_{i})$, $(u_{i},w_{i})$ and $(u_{i},v_{i})$. We denote the obtained digraph by $D'$. Note that every double dominating set and total $2$-dominating set $S'$ in $D'$ contains both $w_{i}$ and $u_{i}$, for each $1\leq i\leq n$. Moreover, $|S'\cap V(D)|$ must be at least as large as $\gamma(D)$ so as to the vertices of $D$ can be double dominated (total $2$-dominated) by $S'$. On the other hand, for each $\gamma(D)$-set $S$, $S\cup(\cup_{i=1}^{n}\{w_{i},u_{i}\})$ is both a double dominating set and a total $2$-dominating set in $D'$. The above argument shows that $\gamma_{\times2}(D')=\gamma^{t}_{\times2}(D')=2n+\gamma(D)$. Now by taking $j=k+2n$, we have $\gamma_{\times2}(D')\leq j$ ($\gamma^{t}_{\times2}(D')\leq j$) if and only if $\gamma(D)\leq k$. So, the problems given in (\ref{comp3}) are NP-complete.

We now construct the digraph $D''$ from $D$ by adding a new vertex $x_{i}$ and a new arc $(v_{i},x_{i})$, for each $1\leq i\leq n$. It is easy to check that $B\cup \{x_{i}\}_{i=1}^{n}$ is both a $2$-limited packing and a total $2$-limited packing in $D''$, in which $B$ is a $\rho(D)$-set. Therefore, $L_{2}(D''),L^{t}_{2}(D'')\geq n+\rho(D)$.

Let $B''$ be a $L_{2}(D'')$-set ($L^{t}_{2}(D'')$-set). Let $u_{i}\notin B''$, for some $1\leq i\leq n$. If $v_{i}\notin B''$, then it must be adjacent to precisely two vertices in $B''$, for otherwise $B''\cup\{u_{i}\}$ would be a $2$-limited packing (total $2$-limited packing) in $D''$ which contradicts the maximality of $B''$. Then $(B''\setminus\{w_{i}\})\cup\{u_{i}\}$ is a $L_{2}(D'')$-set ($L^{t}_{2}(D'')$-set) containing $u_{i}$, in which $w_{i}\in N^{+}(v_{i})\cap B''$. Now if $v_{i}\in B''$, we easily observe that $(B''\setminus\{v_{i}\})\cup\{u_{i}\}$ is such a $L_{2}(D'')$-set ($L^{t}_{2}(D'')$-set). Therefore, we may assume that $u_{i}\in B''$ for all $1\leq i\leq n$.

We then note that $|B''\cap V(D'')|$ must be less than or equal to $|B|$. If this is not true, then it is not hard to see that $B''$ is neither a $2$-limited packing nor a total $2$-limited packing in $D''$, a contradiction. Therefore, $L_{2}(D''),L^{t}_{2}(D'')\leq n+\rho(D)$. It now follows that $L_{2}(D'')=L^{t}_{2}(D'')=n+\rho(D)$. Now by taking $j=k+n$, we have $L_{2}(D'')\geq j$ ($L^{t}_{2}(D'')\geq j$) if and only if $\rho(D)\geq k$. So, the problems given in (\ref{comp4}) are NP-complete. This completes the proof.
\end{proof}

As a consequence of the result above, we conclude that the problems of computing the parameters given in Definitions \ref{def1}-\ref{def4} are NP-hard. Taking into account this fact, it is desirable to bound their values with respect to several different invariants of the digraph. In the next two sections we exhibit such results.


\section{Bounding $\gamma_{\times2}(D)$ and $L_{2}(D)$ for general digraphs}

In this section, we discuss some results about the digraph parameters $L_{k}(D)$ and $\gamma_{\times r}(D)$ for $k,r\in\{1,2\}$.

\begin{proposition}\label{T1}
Let $D$ be a digraph of order $n$. Then,\vspace{1mm}

\emph{(i)} If $\Delta^{+}\geq1$, then $L_{2}(D)\geq \rho(D)+1$.\vspace{1mm}

\emph{(ii)} If $\delta^{-}\geq1$, then $\gamma_{\times2}(D)\geq \gamma(D)+1$.\vspace{1mm}\\
These bounds are sharp.
\end{proposition}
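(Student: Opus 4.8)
The plan is to exploit the elementary fact that raising the packing threshold from $1$ to $2$ (resp. lowering the domination multiplicity from $2$ to $1$) changes an optimal set by exactly one vertex. For part (i) I would build a $2$-limited packing by enlarging a maximum packing; for part (ii) I would produce a dominating set by shrinking a minimum double dominating set. In both cases the hypothesis on the degrees is used only to certify that the one-vertex adjustment stays inside the admissible range.

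For (i), let $B$ be a packing of maximum cardinality, so $|B|=\rho(D)$ and $|N^{+}[v]\cap B|\leq1$ for every $v\in V(D)$. Since $\Delta^{+}\geq1$, some vertex $w$ has an out-neighbor $w'$, and then $\{w,w'\}\subseteq N^{+}[w]$ shows that $V(D)$ itself violates the packing condition; hence $\rho(D)\leq n-1$ and $V(D)\setminus B\neq\emptyset$. Choosing any $x\in V(D)\setminus B$, for every vertex $v$ I get $|N^{+}[v]\cap(B\cup\{x\})|\leq|N^{+}[v]\cap B|+1\leq2$, so $B\cup\{x\}$ is a $2$-limited packing of size $\rho(D)+1$, which yields $L_{2}(D)\geq\rho(D)+1$.

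For (ii), note that $\delta^{-}\geq1$ is exactly the condition guaranteeing that $\gamma_{\times2}(D)$ is well-defined (indeed $V(D)$ is then a double dominating set). Take a minimum double dominating set $S$, so $|N^{-}[v]\cap S|\geq2$ for all $v$; in particular $|S|\geq2$. Deleting an arbitrary $x\in S$, for each $v$ I obtain $|N^{-}[v]\cap(S\setminus\{x\})|\geq|N^{-}[v]\cap S|-1\geq1$, so $S\setminus\{x\}$ is a nonempty dominating set. Therefore $\gamma(D)\leq|S|-1=\gamma_{\times2}(D)-1$, i.e. $\gamma_{\times2}(D)\geq\gamma(D)+1$.

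For sharpness I would check the complete biorientation $\overleftrightarrow{K_{n}}$ with $n\geq2$, where $N^{+}[v]=N^{-}[v]=V(\overleftrightarrow{K_{n}})$ for every $v$. The packing and $2$-limited packing conditions then reduce to $|B|\leq1$ and $|B|\leq2$, giving $\rho=1$ and $L_{2}=2$, while the domination and double-domination conditions reduce to $|S|\geq1$ and $|S|\geq2$, giving $\gamma=1$ and $\gamma_{\times2}=2$; all four bounds hold with equality. The arguments are short, and the only genuine care is in the two existence checks: guaranteeing a vertex outside a maximum packing, which is precisely where $\Delta^{+}\geq1$ enters, and confirming that removing one vertex from a minimum double dominating set cannot empty it, so the remainder is a legitimate dominating set. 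These two points, rather than any computation, are the main obstacle.
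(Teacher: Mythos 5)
Your proof is correct and follows essentially the same route as the paper: augment a maximum packing by one vertex (using $\Delta^{+}\geq1$ to guarantee a vertex outside it) for (i), delete one vertex from a minimum double dominating set for (ii), and verify sharpness on $\overleftrightarrow{K_{n}}$. Your write-up is in fact slightly more careful than the paper's, which states the final inequality of (ii) with the roles of $\gamma(D)$ and $\gamma_{\times2}(D)$ transposed (an evident typo that your version silently corrects).
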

\begin{proof}
(i) Let $B$ be a maximum packing in $D$. Since $|N^{+}[v]\cap B|\leq1$ for every vertex $v\in V(D)$ and $\Delta^{+}\geq1$, it follows that at least one vertex in the closed out-neighborhood of a vertex of maximum out-degree belongs to $V(D)\setminus B$. So, $B\neq V(D)$. Let $u\in V(D)\setminus B$. It is easily observed that $B\cup\{u\}$ is a $2$-limited packing in $D$. Therefore, $L_{2}(D)\geq|B\cup\{u\}|=\rho(D)+1$.

(ii) Let $S$ be a minimum double dominating set in $D$. Let $u\in S$. It is easy to see that $S\setminus\{u\}$ is a dominating set in $D$. So, $\gamma_{\times2}(D)\leq|S|-1=\gamma(D)-1$.

The complete biorientation $\overleftrightarrow{K_{n}}$ of the complete graph $K_{n}$ of order $n\geq2$ shows that the bounds in both (i) and (ii) are sharp.
\end{proof}

\begin{proposition}\label{T2}
Let $k,r\in\{1,2\}$ and let $D$ be a digraph with $\delta^{-}(D)\geq r-1$. Then, $L_{k}(D)\leq \frac{k}{r}\gamma_{\times t}(D)$. This bound is sharp.
\end{proposition}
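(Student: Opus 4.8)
The statement as printed involves $\gamma_{\times t}(D)$, which I read as $\gamma_{\times r}(D)$, the $r$-tuple domination number of $D$ (so $\gamma_{\times1}=\gamma$ and $\gamma_{\times2}=\gamma_{\times2}$); this is the only reading consistent with the hypothesis $\delta^{-}(D)\geq r-1$, which is precisely what guarantees the existence of an $r$-tuple dominating set. The plan is then a single double-counting argument relating a maximum $k$-limited packing to a minimum $r$-tuple dominating set. I would fix $B\subseteq V(D)$ with $|B|=L_{k}(D)$ and $|N^{+}[v]\cap B|\leq k$ for every $v$, and $S\subseteq V(D)$ with $|S|=\gamma_{\times r}(D)$ and $|N^{-}[v]\cap S|\geq r$ for every $v$.

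The engine is the symmetry of the closed-neighborhood incidence in a digraph: for any two vertices $s,u$ we have $s\in N^{-}[u]$ if and only if $u\in N^{+}[s]$, since either $s=u$ or the arc $(s,u)$ witnesses both memberships. I would apply this to the double sum $\sum_{u\in B}|N^{-}[u]\cap S|$. Counting over $u\in B$ and using that $S$ is $r$-tuple dominating gives $\sum_{u\in B}|N^{-}[u]\cap S|\geq r|B|=rL_{k}(D)$. The same sum, re-indexed through the incidence symmetry, equals $\sum_{s\in S}|N^{+}[s]\cap B|$, and using that $B$ is a $k$-limited packing gives $\sum_{s\in S}|N^{+}[s]\cap B|\leq k|S|=k\gamma_{\times r}(D)$. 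Chaining the two estimates yields $rL_{k}(D)\leq k\gamma_{\times r}(D)$, i.e.\ the claimed bound. I do not expect a genuine obstacle; the only delicate point is keeping the orientations straight so that the packing hypothesis attaches to out-neighborhoods and the domination hypothesis to in-neighborhoods, in accordance with Definitions \ref{def1} and \ref{def3}.

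For sharpness I would use the complete biorientation $\overleftrightarrow{K_{n}}$ with $n\geq2$. There every closed out- and in-neighborhood is all of $V$, so the packing condition forces $L_{k}(\overleftrightarrow{K_{n}})=k$ and the domination condition forces $\gamma_{\times r}(\overleftrightarrow{K_{n}})=r$ for all $k,r\in\{1,2\}$. Hence $L_{k}=k=\tfrac{k}{r}\cdot r=\tfrac{k}{r}\gamma_{\times r}$, so equality is attained simultaneously for every admissible pair $(k,r)$.
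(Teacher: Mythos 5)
Your proof is correct and is essentially the paper's own argument: the authors also double-count the incidences between $B$ and the closed out-neighborhoods of vertices of $S$ (via the set $Z=\{(u,v): u\in B,\ v\in S,\ u\in N^{+}[v]\}$), obtaining $r|B|\leq|Z|\leq k|S|$, and they use the same sharpness example $\overleftrightarrow{K_{n}}$. Your reading of $\gamma_{\times t}$ as $\gamma_{\times r}$ matches the paper's intent as well.
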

\begin{proof}
Let $B$ and $S$ be a $L_{k}(D)$-set and a $\gamma_{\times r}(D)$-set, respectively. We define $Z=\{(u,v)\mid u\in B,\ v\in S\ \mbox{and}\ u\in N^{+}[v]\}$. Here $(u,v)$ is an ordered pair of $u$ and $v$. Since $B$ is a $k$-limited packing in $D$, at most $k$ vertices in $B$ belong to the closed out-neighborhood of every vertex in $S$. Therefore, $|Z|\leq k|S|$. Moreover, since $S$ is an $r$-tuple dominating set in $D$, every vertex in $B$ belongs to the closed out-neighborhood of at least $r$ vertices in $S$. Therefore, $r|B|\leq|Z|$. The desired upper bound follows now from the last two inequalities.

That the bound is sharp can be seen by considering the digraph $\overleftrightarrow{K_{n}}$ on $n\geq r$ vertices.
\end{proof}

In the next result, we bound the $2$-limited packing number of a digraph from above just in terms of its order and minimum in-degree. We first introduce a family of digraphs in order to characterize all digraphs attaining the upper bound. Let $D'$ be a functional digraph with $V(D')=\{v_{1},\cdots,v_{n'}\}$. Choose $r\geq \Delta^{-}(D')$ such that $p=(r-1)n'\equiv0$ (mod $2$). Add a set of new vertices $U=\{u_{1},\cdots,u_{p/2}\}$ and new arcs $(u_{i},v_{j})$ such that

($i$) every vertex $u_{i}$ is incident with precisely two such arcs, and

($ii$) $deg^{-}(v_{j})=r$ for all $1\leq j\leq n'$.\\
Now add some arcs among the vertices $u_{i}$ and some arcs from $V(D')$ to $U$, such that $r$ is the minimum in-degree of the constructed digraph. Let $\Omega$ be the family of digraphs $D$ constructed as above.

\begin{theorem}\label{123}
Let $D$ be a digraph of order $n$ with minimum in-degree $\delta^{-}\geq1$. Then, $L_{2}(D)\leq \frac{2n}{\delta^{-}+1}$ with equality if and only if $D\in \Omega$.
\end{theorem}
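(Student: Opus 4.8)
The plan is to establish the bound by a double-counting argument over the arcs directed into a maximum packing, and then to read off the equality conditions and match them against the defining clauses of $\Omega$.

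First I would fix a maximum $2$-limited packing $B$, so that $|B|=L_2(D)$, and count the ordered pairs $(v,u)$ with $u\in B$ and $u\in N^{+}[v]$ in two ways. Summing over $v$ gives $\sum_{v\in V(D)}|N^{+}[v]\cap B|\leq 2n$, since each summand is at most $2$ by the packing condition and there are $n$ vertices. The key observation is that $u\in N^{+}[v]$ holds exactly when $v\in N^{-}[u]$, so grouping the same pairs by $u\in B$ yields $\sum_{u\in B}|N^{-}[u]|=\sum_{u\in B}(deg^{-}(u)+1)\geq |B|(\delta^{-}+1)$. Chaining these gives $|B|(\delta^{-}+1)\leq 2n$, which is the desired upper bound. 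This part is routine.

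The substance lies in the equality characterization. Equality forces both inequalities above to be tight, which translates into two clean structural facts: every vertex $u\in B$ satisfies $deg^{-}(u)=\delta^{-}$ (write $r:=\delta^{-}$), and every vertex $v\in V(D)$ satisfies $|N^{+}[v]\cap B|=2$. For the direction ``equality $\Rightarrow D\in\Omega$'', I would set $D':=D\langle B\rangle$ and $U:=V(D)\setminus B$. Applying the second fact to a vertex of $B$ (which lies in its own closed out-neighborhood) shows each vertex of $B$ has exactly one out-neighbor inside $B$, so $D'$ is functional; applying it to a vertex of $U$ (which does not) shows each such vertex sends exactly two arcs into $B$, matching clause (i). The first fact gives $deg^{-}_{D}(v_j)=r$ for every $v_j\in B$, which is clause (ii), and since a $B$-vertex's in-degree within $D'$ is at most its in-degree in $D$, we get $\Delta^{-}(D')\leq r$. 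A short count then confirms $|U|=(r-1)n'/2=p/2$ (so the parity condition $p=(r-1)n'\equiv 0\pmod 2$ holds automatically), and the remaining arcs of $D$, those among $U$ and those from $B$ to $U$, are precisely the ``free'' arcs permitted in the construction, with $\delta^{-}(D)=r$ guaranteeing the minimum-in-degree requirement. Hence $D\in\Omega$.

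For the converse I would take $B=V(D')$ in any $D\in\Omega$ and verify directly that it is a $2$-limited packing: a vertex $v_j$ of $D'$ has as its $B$-out-neighbors only itself and its unique functional out-neighbor (two in all), while a vertex $u_i$ of $U$ has exactly its two prescribed out-neighbors in $B$, and no further arcs reach $B$. Thus $|N^{+}[v]\cap B|=2$ for every $v$, and computing $|B|=n'=2n/(r+1)$ shows $L_2(D)\geq n'=2n/(\delta^{-}+1)$, forcing equality. I expect the main obstacle to be the bookkeeping in the forward direction, namely checking that the reconstructed $D'$, $U$, and arc sets satisfy every clause of the $\Omega$-construction, especially the automatic parity and the correct allocation of in-arcs to each $v_j$, rather than any conceptual difficulty.
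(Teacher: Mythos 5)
Your proposal is correct and follows essentially the same route as the paper: a double count of closed-out-neighborhood incidences with the maximum $2$-limited packing $B$ (the paper books this as the arc set $(V(D)\setminus B,B)_D$ plus the internal arcs of $B$, which is equivalent to your pair count), followed by reading off the two tightness conditions ($deg^-(u)=\delta^-$ on $B$ and $|N^+[v]\cap B|=2$ everywhere) and matching them to the clauses of $\Omega$ with $D'=D\langle B\rangle$, $r=\delta^-$, $U=V(D)\setminus B$. The converse verification via $B=V(D')$ is also the paper's argument, so no changes are needed.
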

\begin{proof}
Let $B$ be a $L_{2}(D)$-set. By the definition, every vertex in $V(D)\setminus B$ has at most two out-neighbors in $B$. Thus,
\begin{equation}\label{Bob1}
|(V(D)\setminus B,B)_{D}|\leq2(n-|B|).
\end{equation}

On the other hand, since every vertex in $B$ is adjacent to at most one vertex in $B$ and $\sum_{v\in B}deg^{-}_{B}(v)=\sum_{v\in B}deg^{+}_{B}(v)$, we have
\begin{equation}\label{Bob2}
\begin{array}{lcl}
|B|(\delta^{-}-1)\leq \sum_{v\in B}deg^{-}(v)-\sum_{v\in B}deg^{+}_{B}(v)&=&\sum_{v\in B}(deg^{-}(v)-deg^{-}_{B}(v))\\
&=&\sum_{v\in B}deg^{-}_{V(D)\setminus B}(v)=|(V(D)\setminus B,B)_{D}|.
\end{array}
\end{equation}
Together inequalities (\ref{Bob1}) and (\ref{Bob2}) imply the desired upper bound.

Suppose that $D\in \Omega$. It is easily seen that $V(D')$ is a $2$-limited packing in $D$. Moreover, $\delta^{-}(D)=r$ and $n=n'+p/2$. Therefore, $|V(D')|=n'=2n/(\delta^{-}+1)$. We now have $L_{2}(D)\geq2n/(\delta^{-}+1)$, implying the desired equality.

Let the upper bound hold with equality. Then both (\ref{Bob1}) and (\ref{Bob2}) hold with equality, necessarily. The equality in (\ref{Bob2}) shows that every vertex in $B$ is adjacent to exactly one vertex in $B$. Therefore, $D\langle B\rangle$ is a functional digraph. Also, all vertices of this digraph have in-degree $\delta^{-}$. The equality in (\ref{Bob1}) shows that every vertex in $V(D)\setminus B$ is adjacent to exactly two vertives in $B$. Thus, the membership $D$ in $\Omega$ easily follows by choosing $D\langle B\rangle$, $\delta^{-}$ and $V(D)\setminus B$ for $D'$, $r$ and $U$, respectively, in the description of $\Omega$.
\end{proof}

We conclude this section by bounding the total $2$-domination number of a digraph from below in terms of its order and maximum out-degree. Indeed, the following theorem for total $2$-domination can be considered as a result analogous to Theorem \ref{123} for total $2$-limited packing. Similarly to that for Theorem \ref{123}, we introduce a family of digraphs so as to characterize all digraphs attaining the lower bound given in the next theorem. We begin with a contrafunctional digraph $D'$ with $V(D')=\{v_{1},\cdots,v_{n'}\}$. Choose $r\geq \Delta^{+}(D')$ such that $q=(r-1)n'\equiv0$ (mod $2$). Add a set of new vertices $U=\{u_{1},\cdots,u_{q/2}\}$ and new arcs $(v_{i},u_{j})$ such that

($i$) every vertex $u_{j}$ is incident with precisely two such arcs, and

($ii$) $deg^{+}(v_{i})=r$ for all $1\leq i\leq n'$.\\
Now add some arcs among the vertices $u_{j}$ and some arcs from $U$ to $V(D')$, such that $r$ is the maximum out-degree of the constructed digraph. Let $\Theta$ be the family of digraphs $D$ constructed as above.

\begin{theorem}
For any digraph $D$ of order $n$ with maximum out-degree $\Delta^{+}$ and minimum in-degree $\delta^{-}\geq1$, $\gamma_{\times2}(D)\geq \frac{2n}{\Delta^{+}+1}$. Furthermore, the equality holds if and only if $D\in \Theta$.
\end{theorem}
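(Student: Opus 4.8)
The plan is to mirror the proof of Theorem \ref{123} by a counting argument on the arcs between a minimum double dominating set $S$ and its complement, but with the roles of in- and out-degrees interchanged (since double domination looks at in-neighborhoods while the packing problem looked at out-neighborhoods). Let $S$ be a $\gamma_{\times2}(D)$-set. By Definition \ref{def1}, every vertex of $D$ is dominated by at least two vertices of $S$, which means every vertex has in-degree at least $2$ from $S$. In particular, every vertex in $V(D)\setminus S$ receives at least two arcs from $S$, so counting the arcs from $S$ to its complement gives
\begin{equation}\label{lbob1}
|(S,V(D)\setminus S)_{D}|\geq2(n-|S|).
\end{equation}

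For the reverse side, I would bound the same arc set from above using the maximum out-degree. Every vertex in $S$ is dominated twice by $S$, i.e.\ $deg^{-}_{S}(v)\geq2$ for each $v\in S$; since $\sum_{v\in S}deg^{-}_{S}(v)=\sum_{v\in S}deg^{+}_{S}(v)$, the induced subdigraph $D\langle S\rangle$ carries at least $2|S|$ arcs in each direction. Subtracting these internal out-arcs from the total out-degree budget of $S$ gives
\begin{equation}\label{lbob2}
\begin{array}{lcl}
|(S,V(D)\setminus S)_{D}|&=&\sum_{v\in S}deg^{+}_{V(D)\setminus S}(v)=\sum_{v\in S}\bigl(deg^{+}(v)-deg^{+}_{S}(v)\bigr)\\
&\leq&\sum_{v\in S}deg^{+}(v)-2|S|\leq(\Delta^{+}-2)|S|.
\end{array}
\end{equation}
Combining (\ref{lbob1}) and (\ref{lbob2}) yields $2(n-|S|)\leq(\Delta^{+}-2)|S|$, which rearranges to $\gamma_{\times2}(D)=|S|\geq 2n/(\Delta^{+}+1)$, as required.

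For the equality characterization I would argue in both directions exactly as in Theorem \ref{123}. If $D\in\Theta$, then $V(D')$ is a double dominating set of the right size: condition (ii) forces $deg^{+}(v_{i})=r$ so $\Delta^{+}=r$, and with $n=n'+q/2$ one computes $n'=2n/(\Delta^{+}+1)$, giving $\gamma_{\times2}(D)\leq 2n/(\Delta^{+}+1)$ and hence equality. Conversely, if the bound is tight then both (\ref{lbob1}) and (\ref{lbob2}) are equalities. Equality in (\ref{lbob2}) forces $deg^{-}_{S}(v)=2$ for every $v\in S$ and $deg^{+}(v)=\Delta^{+}$ for every $v\in S$; the first condition means $D\langle S\rangle$ is contrafunctional (in-degree exactly one after accounting for the two dominators — here I must be careful, since double domination gives in-degree two within $S$, so the correct reading is that the structure matches the construction of $\Theta$ with $r=\Delta^{+}$). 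Equality in (\ref{lbob1}) forces every vertex of $V(D)\setminus S$ to be dominated by exactly two vertices of $S$. Reading off $D\langle S\rangle$, $\Delta^{+}$, and $V(D)\setminus S$ as the data $D'$, $r$, $U$ in the description of $\Theta$ then shows $D\in\Theta$.

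The main obstacle I anticipate is getting the internal-arc bookkeeping in (\ref{lbob2}) exactly right and reconciling it with the definition of $\Theta$. Double domination imposes $deg^{-}_{S}(v)\geq2$ rather than $\geq1$, so the contrafunctional digraph $D'$ in the construction must be interpreted as encoding the arcs beyond the two forced dominators; I would need to check whether the intended family $\Theta$ truly corresponds to $deg^{-}_{S}(v)=2$ with $D\langle S\rangle$ contrafunctional, or whether a constant shift in the degree count is hidden in the definition. Verifying that the equality conditions extracted from the two inequalities assemble precisely into the construction of $\Theta$ (rather than a slightly larger or smaller family) is the delicate part; the inequalities themselves are routine arc-counting.
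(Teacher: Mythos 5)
Your overall strategy---double counting the arc set $(S,V(D)\setminus S)_D$, bounding it below by $2(n-|S|)$ and above via the out-degrees of $S$---is exactly the paper's, but your second inequality is wrong, and the error is not cosmetic. You assert that every $v\in S$ satisfies $deg^{-}_{S}(v)\geq2$. Under Definition \ref{def1} a vertex dominates \emph{itself} and its out-neighbors, so a vertex of $S$ already contributes one to its own domination count and needs only \emph{one} in-neighbor inside $S$, i.e.\ $deg^{-}_{S}(v)\geq1$ (the abstract states this explicitly: ``every vertex in $S$ is adjacent from at least one vertex in $S$''). Your own sanity check against $\Theta$ exposes the problem: there $S=V(D')$ induces a contrafunctional digraph, so $deg^{-}_{S}(v)=1$ for all $v\in S$ and $|(S,V(D)\setminus S)_D|=(\Delta^{+}-1)|S|$, which violates your claimed bound $(\Delta^{+}-2)|S|$. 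The final algebra compounds this: $2(n-|S|)\leq(\Delta^{+}-2)|S|$ rearranges to $|S|\geq 2n/\Delta^{+}$, not $2n/(\Delta^{+}+1)$; were your chain valid it would establish a strictly stronger lower bound that no member of $\Theta$ attains, contradicting the equality characterization you then try to prove.

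The fix is exactly the constant shift you suspected might be ``hidden in the definition.'' Using $deg^{-}_{S}(v)\geq1$ and $\sum_{v\in S}deg^{-}_{S}(v)=\sum_{v\in S}deg^{+}_{S}(v)$ one gets
$|(S,V(D)\setminus S)_D|=\sum_{v\in S}deg^{+}(v)-\sum_{v\in S}deg^{-}_{S}(v)\leq(\Delta^{+}-1)|S|$,
and combining with $2(n-|S|)\leq|(S,V(D)\setminus S)_D|$ yields $|S|\geq 2n/(\Delta^{+}+1)$ as required. Equality then forces $deg^{-}_{S}(v)=1$ for all $v\in S$ (so $D\langle S\rangle$ is contrafunctional, consistent with the construction of $\Theta$), $deg^{+}(v)=\Delta^{+}$ for all $v\in S$, and exactly two in-neighbors in $S$ for every vertex outside $S$; reading off $D\langle S\rangle$, $\Delta^{+}$ and $V(D)\setminus S$ as $D'$, $r$ and $U$ gives $D\in\Theta$, which is the paper's argument. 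The sufficiency direction ($D\in\Theta$ implies equality) in your proposal is fine as written.
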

\begin{proof}
Let $S$ be a $\gamma_{\times2}(D)$-set. Every vertex in $V(D)\setminus S$ is adjacent from at least two vertices in $S$, by the definition. Hence,
\begin{equation}\label{Bob3}
2(n-|S|)\leq|(S,V(D)\setminus S)_{D}|.
\end{equation}

On the other hand, $\sum_{v\in S}deg^{-}_{S}(v)=\sum_{v\in S}deg^{+}_{S}(v)$ and every vertex in $S$ is adjacent from at least one vertex in $S$. Therefore,
\begin{equation}\label{Bob4}
\begin{array}{lcl}
|(S,V(D)\setminus S)_{D}|=\sum_{v\in S}deg_{V(D)\setminus S}^{+}(v)&=&\sum_{v\in S}(deg^{+}(v)-deg_{S}^{+}(v))\\
&=&\sum_{v\in S}deg^{+}(v)-\sum_{v\in S}deg_{S}^{-}(v)\leq(\Delta^{+}-1)|S|.
\end{array}
\end{equation}
The desired lower bound now follows by the inequalities (\ref{Bob3}) and (\ref{Bob4}).

Suppose that $D\in \Theta$. Clearly, $r=\Delta^{+}(D)$ and $n=n'+q/2$. Moreover, $V(D')$ is a double dominating set in $D$. Since $|V(D')|=n'=2n/(\Delta^{+}+1)$, it follows that $\gamma_{\times2}(D)\geq2n/(\Delta^{+}+1)$, implying the equality in the lower bound.

Conversely, let the equality hold in the lower bound. Then both (\ref{Bob3}) and (\ref{Bob4}) hold with equality, necessarily. Therefore, every vertex in $S$ is adjacent from exactly one vertex in $S$. This shows that $D\langle S\rangle$ is a contrafunctional digraph. Also, the equality $\sum_{v\in S}deg^{+}(v)=\Delta^{+}|S|$ implies that every vertex of $D\langle S\rangle$ has the out-degree $\Delta^{+}$. On the other hand, the equality in (\ref{Bob3}) shows that every vertex in $V(D)\setminus S$ is adjacent from exactly two vertices in $S$. That the digraph $D$ is in $\Omega$ can be easily seen by choosing $D\langle S\rangle$, $\Delta^{+}$ and $V(D)\setminus S$ for $D'$, $r$ and $U$, respectively, in the description of $\Theta$.
\end{proof}


\section{Directed trees}

We first recall that a maximization problem \textbf{M} and a minimization problem \textbf{N}, defined on the same instances (such as graphs or digraphs), are {\em dual problems} if the value of every candidate solution $M$ to \textbf{M} is less than or equal to the value of every candidate solution $N$ to \textbf{N}. Often the ``value'' is cardinality. Analogously to many well known pairs of dual (graph or digraph) problems like {\em matching} and {\em vertex covering}, {\em packing} and {\em domination}, etc. the following theorem shows that the problems ``total $2$-domination'' and ``total $2$-limited packing'', on the instances of directed trees, are dual problems.

Recall that in a tree a support vertex is called a \textit{weak} (\textit{strong}) \textit{support vertex} if it is adjacent to (more than) one leaf. Also, a {\em double star} $S_{a,b}$ is a tree with exactly two non-leaf vertices in which one support vertex is adjacent to $a$ leaves and the other to $b$ leaves.

\begin{theorem}\label{imperative}
For any directed tree $T$ of order $n\geq2$, $L_{2}^{t}(T)\leq \gamma_{\times2}^{t}(T)$.
\end{theorem}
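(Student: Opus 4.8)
The plan is to prove the stronger statement that $|B|\le|S|$ holds for \emph{every} total $2$-limited packing $B$ and every total $2$-dominating set $S$ of $T$; taking $B$ of maximum size and $S$ of minimum size then yields $L_2^t(T)\le\gamma_{\times2}^t(T)$ and simultaneously verifies the duality asserted just before the theorem. The engine is the incidence double counting already used in Proposition~\ref{T2}. I would set
\[
Z=\{(u,v)\mid u\in B,\ v\in S,\ u\in N^{+}[v]\}
\]
and estimate $|Z|$ from both sides.

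For the upper bound, fix $v\in S$ and bound $|N^{+}[v]\cap B|$. If $v\notin B$ the packing condition gives $|N^{+}(v)\cap B|\le2$; if $v\in B$ then $v$ is adjacent with at most one vertex of $B$, so $|N^{+}(v)\cap B|\le1$ and the closed neighbourhood adds only $v$ itself, again giving $|N^{+}[v]\cap B|\le2$. Hence $|Z|\le2|S|$. For the lower bound, fix $u\in B$ and count the $v\in S$ with $u\in N^{+}[v]$, i.e.\ $|N^{-}[u]\cap S|$. For $u\in B\setminus S$, total $2$-domination gives $|N^{-}(u)\cap S|\ge2$, a contribution of at least $2$. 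For $u\in B\cap S$ the diagonal term $u=v$ already contributes $1$, and since $D\langle S\rangle$ has no isolated vertex, $u$ has a neighbour in $S$; when that neighbour is an in-neighbour we again collect a contribution of $2$.

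Combining the two estimates gives $2|B|-d\le|Z|\le2|S|$, where $d$ counts the ``deficient'' vertices $u\in B\cap S$ whose only $S$-neighbours are out-neighbours (so $N^{-}(u)\cap S=\emptyset$). The main obstacle is exactly this deficiency term: the raw count only yields $|B|\le|S|+d/2$, and closing the gap is where the tree hypothesis must enter. My plan is to exploit acyclicity. Each deficient $u$ carries a witnessing out-arc $(u,u')$ with $u'\in S$, and I would show these witnesses generate a matching amount of unused slack in the bound $|Z|\le2|S|$ (for example, a deficient $u$ with no out-neighbour in $B$ already forces $|N^{+}[u]\cap B|=1$, one below the bound). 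Because the witness arcs all lie in the forest $T$, such slacks attach to distinct vertices of $S$ and cannot be double-counted, so the recovered slack totals at least $d$, sharpening the estimate to $|Z|\le2|S|-d$ and hence to $|B|\le|S|$.

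I expect the delicate part to be the bookkeeping of this slack-recovery step—verifying that every deficient vertex is charged to a private unit of slack with no collision, using only that $T$ is a tree. A possibly cleaner alternative is to root $T$ and induct on $n$, peeling a leaf together with its penultimate (support) vertex and checking that $L_2^t$ and $\gamma_{\times2}^t$ decrease compatibly; the same acyclicity is what guarantees that the removed arcs interact with $B$ and $S$ in a controlled, non-overlapping way.
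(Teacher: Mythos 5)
Your double-counting framework is correct as far as it goes: with $Z$ as you define it, the bounds $|Z|\le 2|S|$ and $|Z|\ge 2|B|-d$ both hold, giving $|B|\le|S|+d/2$. But this is only a reformulation --- the entire content of the theorem is compressed into the deferred claim that the deficiency $d$ can be recovered from slack --- and the recovery mechanism you propose (each deficient vertex charges a unit of slack to itself or to the head of its witness arc, with tree-ness merely preventing collisions) is not just delicate, it is false. Consider the directed tree $T$ on $\{u,u',w,a,b_1,b_2,b_3,b_4\}$ with arcs $(u,w)$, $(u,u')$, $(a,w)$, $(u',b_1)$, $(u',b_2)$, $(a,b_3)$, $(b_3,b_4)$. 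Take $S=V(T)\setminus\{w\}$, which is a total $2$-dominating set ($w$ has the two in-neighbours $u,a\in S$, and the arcs $(u,u')$, $(u',b_1)$, $(a,b_3)$, $(b_3,b_4)$ leave no vertex of $T\langle S\rangle$ isolated), and take $B=\{u,w,b_1,b_2,b_3,b_4\}$, which is a total $2$-limited packing (each vertex of $B$ has at most one $B$-neighbour, and the only vertices outside $B$, namely $u'$ and $a$, have exactly two out-neighbours in $B$). The unique deficient vertex is $u$, with unique witness arc $(u,u')$. But $|N^{+}[u]\cap B|=|\{u,w\}|=2$ and $|N^{+}[u']\cap B|=|\{b_1,b_2\}|=2$: neither the deficient vertex nor its witness carries any slack, and $u$'s out-neighbour in $B$, namely $w$, is not even in $S$. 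The slack that makes the count work out ($s=3$ here) sits at $b_1$, $b_2$, $b_4$, at distance at least $2$ from $u$; moreover, by iterating the gadget (pull a slack vertex such as $b_1$ out of $S$ by giving it a second in-neighbour $a_1\in S$, which must then be given two out-neighbours in $B$ to stay slack-free, and so on) the compensating slack can be pushed arbitrarily far from the deficient vertex while $d$ stays equal to $1$. Hence no local charging along witness arcs can establish $|Z|\le 2|S|-d$; any correct completion has to propagate the accounting globally through the tree, which is essentially as hard as the theorem itself.

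Your fallback suggestion --- root $T$ and induct by peeling a leaf together with its support vertex --- is in fact the route the paper takes, but your one-sentence version is not yet a proof either, and the difficulty it hides is exactly the one above. The paper's induction must handle the situation where deleting the diametral leaf $x$ (or the pair $x,y$) destroys the total $2$-dominating property of $S\setminus\{x\}$ (respectively $S\setminus\{x,y\}$); in that case it deletes an entire subtree rooted at the grandparent $z$, all of whose children are support vertices, and verifies that a total $2$-dominating set and a total $2$-limited packing both lose exactly $2k$ vertices. So the proposal, as it stands, has a genuine gap precisely at the step where the hypothesis that $T$ is a tree must do its work: neither the slack-recovery argument nor the sketched induction is carried out, and the former cannot be carried out in the local form you describe.
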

\begin{proof}
We proceed by induction on the order $n$. The result is obvious for $n=2$. Let $\tilde{T}$ be the underlying tree of $T$. It is easy to check that the result is true when $diam(\tilde{T})\leq3$. In such a case, we have $L_{2}^{t}(T)\leq \gamma_{\times2}^{t}(T)=n$. So, we may assume that $diam(\tilde{T})\geq4$. This implies that $n\geq5$. Assume that the inequality holds for all directed trees $T'$ of order $3\leq n'<n$. Let $T$ be a directed tree of order $n\geq4$. Suppose that $r$ and $x$ are two leaves of $\tilde{T}$ with $d(r,x)=diam(\tilde{T})$. Let $x$ be adjacent with $y$. Note that the choice of $x$ shows that all children of $y$ in $\tilde{T}$ are leaves. Let $B$ and $S$ be a $L_{2}^{t}(T)$-set and a $\gamma_{\times2}^{t}(T)$-set in $T$, respectively. Note that all end-vertices and penultimate vertices belong to every total $2$-dominating set in $T$. Suppose that $y$ is a strong support vertex in $\tilde{T}$. Then $S\setminus\{x\}$ is a total $2$-dominating set in $T'=T-x$. Moreover, it is easy to see that $B\setminus\{x\}$ is a total $2$-limited packing in $T'$. Using now the induction hypothesis we have
$$L_{2}^{t}(T)-1\leq|B\setminus\{x\}|\leq L_{2}^{t}(T')\leq \gamma_{\times2}^{t}(T')\leq|S|-1=\gamma_{\times2}^{t}(T)-1.$$

From now on, we assume that $y$ is a weak support vertex of $\tilde{T}$. Hence, $x$ is a sink or source of $T$ and $y$ is the unique vertex adjacent with $x$. We consider two cases depending on the behavior of $x$.

\textit{Case 1.} Suppose that $S\setminus\{x\}$ is a total $2$-dominating set in the directed tree $T'=T-x$. Moreover, $B\setminus\{x\}$ is a total $2$-limited packing in $T'$. Then, by using the induction hypothesis, we have again
$$L_{2}^{t}(T)-1\leq|B\setminus\{x\}|\leq L_{2}^{t}(T')\leq \gamma_{\times2}^{t}(T')\leq|S|-1=\gamma_{\times2}^{t}(T)-1.$$

\textit{Case 2.} Suppose that $S\setminus\{x\}$ is not a total $2$-dominating set in $T'$. This shows that $y$ is not adjacent with any vertex in $S\setminus\{x\}$. Here we need to consider two more possibilities.

\textit{Subcase 2.1.} Let $S''=S\setminus\{x,y\}$ be a total $2$-dominating set in $T''=T-x-y$. On the other hand, it is clear that $B''=B\setminus\{x,y\}$ is a total $2$-limited packing in $T''$. We then have
$$L_{2}^{t}(T)-2\leq|B''|\leq L_{2}^{t}(T'')\leq \gamma_{\times2}^{t}(T'')\leq|S''|=\gamma_{\times2}^{t}(T)-2.$$

\textit{Subcase 2.2.} Suppose now that $S''=S\setminus\{x,y\}$ is not a total $2$-dominating set in $T''$. This assumption along with the fact that $y$ is not adjacent with any vertex in $S\setminus\{x\}$ imply that there exists a vertex $z\in V(T'')\setminus S''$ such that $|N^{-}_{T''}(z)\cap S''|\leq1$. This shows that $|N^{-}_{T}(z)\cap S|=2$ and $y\in N^{-}(z)$, necessarily. Note that by our choice of $x$, all children of $z$ in $\tilde{T}$ are leaves or support vertices. If $z$ is adjacent with an end-vertex, then we have contradiction to the fact that $y$ is not adjacent with any vertex in $S\setminus\{x\}$. Therefore, all children of $z$ in $\tilde{T}$ are support vertices. Let $\tilde{T}_{z}$ be the subtree of $\tilde{T}$ rooted at $z$ consisting of $z$ and its descendants in $\tilde{T}$. Now consider the directed tree $T'''=T-V(\tilde{T}_{z})$ (our choice of $x$ and $diam(\tilde{T})\geq4$ imply that $|V(T''')|\geq2$). Let $z$ have $k$ children in $\tilde{T}$. It is easy to see that $S'''=S\setminus V(\tilde{T}_{z})$ is a total $2$-dominating set in $T'''$ with $|S'''|=\gamma_{\times2}^{t}(T)-2k$. On the other hand, $B'''=B\setminus V(\tilde{T}_{z})$ is a total $2$-limited packing in $T'''$ with $|B'''|\geq L_{2}^{t}(T)-2k$. Therefore,
$$L_{2}^{t}(T)-2k\leq|B'''|\leq L_{2}^{t}(T''')\leq \gamma_{\times2}^{t}(T''')\leq|S'''|=\gamma_{\times2}^{t}(T)-2k.$$
This completes the proof.
\end{proof}

In what follows we construct a family of directed trees in order to characterize those ones attaining the lower bound in the next theorem.

Let $F=F_{0}$ be a directed forest containing $r$ copies of the directed path $P_{2}$ with arcs $(v_{11},v_{12}),\cdots(v_{r1},v_{r2})$ and $r'$ copies of directed stars $H_{1},\cdots,H_{r'}$ of orders at least $3$ with central vertices $u_{1},\cdots,u_{r'}$, respectively. Let $q=r+r'-1$. We construct the sequence $F_{0},F_{1},\cdots,F_{q}$ of digraphs as follows. Let $F_{1}$ be obtained from $F_{0}$ by adding a vertex $w_{1}$ and two arcs $(x_{1},w_{1})$ and $(y_{1},w_{1})$ for some $x_{1},y_{1}\in A=\{v_{i1},v_{i2}\}_{i=1}^{r}\cup\{u_{i}\}_{i=1}^{r'}$ such that $x_{1}$ and $y_{1}$ are not vertices of a same $P_{2}$-copy. We now obtain $F_{2}$ from $F_{1}$ by adding a vertex $w_{2}$ and two arcs $(x_{2},w_{2})$ and $(y_{2},w_{2})$ such that $x_{2},y_{2}\in A$, $x_{2}\in N_{F_{0}}[\{x_{1},y_{1}\}]$ and $y_{2}\in V(F_{0})\setminus N_{F_{0}}[\{x_{1},y_{1}\}]$ (note that $\{x_{1},y_{1}\}=N^{-}_{F_{1}}(w_{1})$). We now suppose that $F_{j}$ is obtained from $F_{j-1}$ by adding a vertex $w_{j}$ with two arcs arcs $(x_{j},w_{j})$ and $(y_{j},w_{j})$ such that ($i$) $x_{j},y_{j}\in A$, and ($ii$) precisely one of them belongs to $B_{j}=N_{F_{0}}[N^{-}_{F_{j-1}}(\{w_{1},\cdots,w_{j-1}\})]$. We define $\Gamma$ as the family of all digraphs $F_{q}$ constructed as above. In what follows, we first need to show that the above construction is well-defined. Moreover, $\Gamma$ is a family of directed trees (Figure \ref{fig1} depicts a representative member of $\Gamma$).

\begin{proposition}\label{P1}
The following statements hold.\vspace{1mm}\\
\emph{(}a\emph{)} If $F_{0}$ is neither the directed path $P_{2}$ nor a directed star on at least three vertices, then there always exist two arcs $(x_{j},w_{j})$ and $(y_{j},w_{j})$ with the given properties \emph{(}i\emph{)} and \emph{(}ii\emph{)}, for all $1\leq j\leq q$.\vspace{1mm}\\
\emph{(}b\emph{)} $F_{q}$ is a directed tree.
\end{proposition}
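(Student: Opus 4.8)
The plan is to recognize the construction as a forest-growing (Prim-type) process: $F_{0}$ is a disjoint union of $r+r'$ components, and each new vertex $w_{j}$, together with its two in-arcs, should bridge the connected ``blob'' assembled so far to a component of $F_{0}$ not yet reached. The whole argument then reduces to two bookkeeping facts: that at every step a fresh component is still available to be attached (this yields (a)), and that each attachment merges two distinct components of the current digraph without creating a cycle (this yields (b)).

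The key structural observation, which I would establish first, is that $B_{j}$ is always a union of \emph{entire} components of $F_{0}$. Since $A\subseteq V(F_{0})$ and the only in-neighbours of each $w_{i}$ are $x_{i}$ and $y_{i}$, we have $N^{-}_{F_{j-1}}(\{w_{1},\dots,w_{j-1}\})=\{x_{1},y_{1},\dots,x_{j-1},y_{j-1}\}\subseteq A$. For any vertex of $A$ its closed neighbourhood in $F_{0}$ is its whole component (both endpoints of a $P_{2}$-copy, or the centre together with all leaves of a star), so $B_{j}=N_{F_{0}}[\,\cdot\,]$ is precisely the union of those components of $F_{0}$ that contain at least one previously used attachment point; call these the \emph{touched} components. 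Because $F_{0}$ is neither a single $P_{2}$ nor a single star we have $r+r'\ge 2$, and every component of $F_{0}$ contains a vertex of $A$ (an endpoint $v_{i1}$ of each $P_{2}$, the centre $u_{i}$ of each star).

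With this in hand I would run an induction on $j$ maintaining the invariant that, in $F_{j-1}$, the touched components together with $w_{1},\dots,w_{j-1}$ form a single connected component (the blob), each untouched $F_{0}$-component is a separate component of $F_{j-1}$, and exactly $j$ components have been touched (for $j\ge 2$; the first step touches two). For part (a): condition (ii) with $x_{j}\in B_{j}\cap A$ and $y_{j}\in A\setminus B_{j}$ is satisfiable exactly when both $B_{j}\cap A$ and $A\setminus B_{j}$ are nonempty; the former holds because the already used attachment points lie in $B_{j}\cap A$, and the latter because for $1\le j\le q=r+r'-1$ strictly fewer than $r+r'$ components have been touched, so an untouched component, hence an attachment point outside $B_{j}$, remains (the first step instead invokes the ``different $P_{2}$-copy'' requirement, which, given the shape of $A$, is the same as choosing two distinct components). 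For part (b): since $x_{j}$ lies in the blob and $y_{j}$ in an untouched component, they lie in two different components of $F_{j-1}$, so adding the degree-two vertex $w_{j}$ adjacent to both merges exactly these two components and creates no cycle; thus $F_{j}$ is again a forest with one fewer component. Starting from $r+r'$ components and performing $q=r+r'-1$ such steps leaves a single acyclic component, i.e. $F_{q}$ has a tree as underlying graph and is therefore a directed tree.

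The main obstacle is the structural lemma that $B_{j}$ is a union of whole components of $F_{0}$ together with the accompanying connectivity invariant; once these are pinned down, the counting in (a) and the forest argument in (b) are routine. I would be careful, in the inductive step, to check that taking $x_{j}$ from an \emph{already touched} component keeps the blob connected (rather than inadvertently re-bridging two untouched components), which is exactly what the clause ``precisely one of $x_{j},y_{j}$ belongs to $B_{j}$'' guarantees.
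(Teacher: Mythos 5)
Your proof is correct and follows essentially the same route as the paper: the existence of the arcs in (a) rests on the same count (after $j-1$ steps at most $j$ of the $r+r'$ components of $F_0$ have been touched, and $j\le q=r+r'-1$, so an attachment point in $A\setminus B_j$ survives, while the previously used $x_i,y_i$ supply a point of $B_j\cap A$), and (b) is the same forest-growing induction. The only cosmetic difference is at the end of (b): the paper certifies connectedness of the forest $F_q$ via the arc count $|A(F_q)|=|V(F_q)|-1$, whereas you track the number of components directly (each $w_j$ bridges two distinct components, so $r+r'$ components become one after $q$ steps); both work, and your explicit observation that $B_j$ is a union of whole components of $F_0$ usefully makes precise what the paper leaves implicit.
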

\begin{proof}
($a$) Let $F_{0}$ be neither the directed path $P_{2}$ nor a directed star on at least three vertices. Therefore, $r+r'\geq2$. Clearly, there are such arcs for the vertex $w_{1}$. Let $2\leq i\leq q$ be the smallest index for which there is no a pair of arcs $(x_{i},w_{i})$ and $(y_{i},w_{i})$ with the properties ($i$) and ($ii$). Now, consider the digraph $F_{i-1}$. We then add the vertex $w_{i}$. Since $i\leq q=r+r'-1$, it follows that there exists a vertex $y_{i}\in A\setminus B_{i-1}$. Thus, we have the arcs $(x_{i},w_{i})$ and $(y_{i},w_{i})$ in which $x_{i}$ is a vertex in $B_{i-1}$. This contradicts our choice of $i$.

($b$) It is easy to see that $F_{1}$ is a directed forest. Assume now that $F_{i}$, $i\geq1$, is a directed forest. It follows from the way we construct $F_{i+1}$ from $F_{i}$ that the underlying graph of $F_{i+1}$ has no cycle as a subgraph. So, $F_{i+1}$ is a directed forest as well. In particular, $F_{q}$ is a directed forest. Now let $H_{1}\cdots,H_{r'}$ be those $r'$ directed stars in the definition of $F=F_{0}$ of order $t_{1},\cdots,t_{r'}\geq3$, respectively. Then, $|A(F_{q})|=3r+r'+t_{1}+\cdots+t_{r'}-2=|V(F_{q})|-1$. This implies that $F_{q}$ is a directed tree.
\end{proof}

We are now in a position to present the main theorem of this section.

\begin{theorem}\label{T3}
Let $T$ be a directed tree of order $n$ with $e$ end-vertices and $p$ penultimate vertices. Then,
$$\gamma^{t}_{\times2}(T)\geq\frac{2n+e-p+2}{3}.$$
The equality holds if and only if $T\in \Gamma$.
\end{theorem}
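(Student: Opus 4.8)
The plan is to prove the lower bound by a counting/discharging argument based on the structure of a $\gamma^{t}_{\times2}(T)$-set $S$, and then to establish the equality characterization by a careful induction that runs in parallel with the construction of the family $\Gamma$. For the bound itself, I would start from the observation already used in Theorem \ref{imperative}: every end-vertex and every penultimate vertex must lie in $S$. Writing $S = V(T)\setminus S$ for the complement, every vertex of $V(T)\setminus S$ is a non-leaf, non-support internal vertex that is adjacent from at least two vertices of $S$. The natural quantity to double-count is the set of arcs $(S, V(T)\setminus S)_T$. As in inequalities (\ref{Bob3}) and (\ref{Bob4}), I would write $2(n-|S|)\le |(S,V(T)\setminus S)_T|$ on the one hand, but for a tree I expect a sharper handle is available because the underlying graph has exactly $n-1$ edges, so the total arc count is constrained. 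The appearance of $e-p$ in the numerator strongly suggests that the correct accounting weights leaves and support vertices separately: each of the $e$ end-vertices contributes to $S$ but consumes essentially no "outgoing domination capacity," whereas penultimate vertices are forced in but may themselves need to be dominated.

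Concretely, I would set up a weighting argument. The target inequality $3\gamma^{t}_{\times2}(T)\ge 2n+e-p+2$ rearranges to $3|S| - 2n \ge e - p + 2$, i.e. $3|S| \ge 2n + e - p + 2$. Since every internal vertex not in $S$ needs two in-arcs from $S$ while every vertex of $S$ needs at least one in-arc from $S$, summing in-degrees from $S$ over all vertices gives $\sum_{v\in V(T)} |N^-_S(v)| = |A(T\langle S\rangle)| + |(S, V(T)\setminus S)_T|$, and this equals $\sum_{u\in S} \deg^+(u)$. I would combine the domination lower bounds ($\ge 1$ on $S$, $\ge 2$ on the complement) with the tree edge bound $|A(T)| = n-1$ to extract the factor-three relationship. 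The cleanest route is probably induction on $n$, mirroring the proof of Theorem \ref{imperative}: delete a deepest subtree $\tilde T_z$ rooted at a vertex $z$ whose children are leaves or support vertices (exactly the configuration isolated in Subcase 2.2 there), track how $n$, $e$, $p$, and $\gamma^{t}_{\times2}$ each change under the deletion, and verify the inequality is preserved. The hard part will be bookkeeping the simultaneous change in $e$ and $p$: removing a subtree can turn a former internal vertex into a new support vertex or even a new end-vertex, so $e-p$ does not change monotonically, and several base/boundary configurations (small diameter, $z$ adjacent to an end-vertex) must be checked by hand.

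For the equality characterization, I would argue both directions. For the forward direction (if $T\in\Gamma$ then equality holds), I would exhibit an explicit minimum total $2$-dominating set dictated by the construction: the base forest $F_0$ contributes its $P_2$-copies and directed stars, whose vertices $A$ are all leaves, support vertices, or star centers and hence forced into any total $2$-dominating set, and each added vertex $w_j$ is dominated exactly by its two in-neighbors $x_j,y_j\in A$. Counting using the relation $|V(F_q)| = 3r + r' + \sum t_i - 1$ and $|A(F_q)| = |V(F_q)|-1$ established in Proposition \ref{P1}, together with the values of $e$ and $p$ for $F_q$, should reproduce $\gamma^{t}_{\times2}(T) = (2n+e-p+2)/3$ exactly, since every defining inequality in the counting argument is met with equality by design. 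For the converse (equality forces $T\in\Gamma$), I would trace the inductive deletion argument and show that equality at each step forces the deleted $\tilde T_z$ to be precisely one of the allowed building blocks and forces the attachment of $w$-type vertices to satisfy conditions (i) and (ii) of the $\Gamma$-construction; in particular equality rules out strong support vertices (the Subcase where we delete a single leaf cannot be tight) and pins down that every non-forced vertex has in-degree exactly two from $S$ with its two in-neighbors obeying the required membership pattern relative to $B_j = N_{F_0}[N^-_{F_{j-1}}(\{w_1,\dots,w_{j-1}\})]$. I expect this converse to be the most delicate part, because it requires showing that the local equality conditions globally assemble into a valid member of $\Gamma$ respecting the ordering $F_0,F_1,\dots,F_q$, which is exactly the well-definedness content of Proposition \ref{P1}.
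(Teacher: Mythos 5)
Your starting identity is the right one and matches the paper's: the paper also double-counts $\sum_{v\in S}\deg^{+}(v)=|(S,V(T)\setminus S)_{T}|+|A(T\langle S\rangle)|$ against the tree bound $\sum_{v\in S}\deg^{+}(v)\leq n-1$ (inequalities (\ref{EQ2}) and (\ref{EQ3})). But there are two genuine gaps in how you lower-bound the left-hand side. First, you state the condition on $S$ as ``every vertex of $S$ needs at least one in-arc from $S$.'' That is the double-domination condition, not the total $2$-domination condition; for a directed tree a source end-vertex has no in-arcs at all, so this requirement is unsatisfiable and would force $\gamma^{t}_{\times2}(T)$ to be undefined. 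The correct condition is only that $T\langle S\rangle$ has no isolated vertices, which gives the much weaker $|A(T\langle S\rangle)|\geq|S|/2$. Second, and more importantly, that weaker bound only yields $\gamma^{t}_{\times2}(T)\geq(2n+2)/3$: the entire $e-p$ term comes from the refinement you gesture at but never carry out, namely that all $e$ pendant arcs lie in $A(T\langle S\rangle)$ (both endpoints are forced into $S$) and cover exactly the $e+p$ end- and penultimate vertices, so the remaining $|S|-e-p$ vertices of $S$ require at least $(|S|-e-p)/2$ additional arcs; hence $|A(T\langle S\rangle)|\geq e+(|S|-e-p)/2$, which is $(e-p)/2$ better than $|S|/2$ and is precisely what produces the numerator $2n+e-p+2$. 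Your proposed retreat to induction on a deepest subtree is a different route that you yourself flag as having unresolved bookkeeping (the non-monotone behaviour of $e-p$ under deletion), and it is not the paper's argument; the paper's proof is the direct global count and takes three lines once the refined bound on $|A(T\langle S\rangle)|$ is in hand.

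On the equality characterization: your forward direction (exhibit the vertex set of the $P_{2}$- and $H_{i}$-copies as a total $2$-dominating set and count) is exactly the paper's. For the converse, the paper does not induct; it observes that equality forces both (\ref{EQ2}) and (\ref{EQ3}) to be tight, reads off that $V(T)\setminus S$ is independent, that every vertex of $V(T)\setminus S$ has exactly two in-neighbours in $S$, and that $T\langle S\rangle$ is a disjoint union of directed $P_{2}$'s and directed stars, and then uses connectivity and acyclicity of the underlying tree to order the vertices of $V(T)\setminus S$ as $w_{1},\dots,w_{q}$ satisfying conditions (i) and (ii) of the $\Gamma$-construction. Your inductive converse would need all of the same structural facts plus the deletion bookkeeping, so as written the proposal does not yet constitute a proof of either direction of the characterization.
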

\begin{proof}
Let $S=\{v_{1},\cdots,v_{|S|}\}$ be a $\gamma^{t}_{\times2}(T)$-set. Note that all end-vertices and penultimate vertices belong to $S$, necessarily. Therefore, all pendant arcs belong to $A(T<S>)$. Since every vertex in $V(T)\setminus S$ is adjacent from at least two vertices in $S$ and $T<S>$ has no isolated vertices, it follows that
\begin{equation}\label{EQ2}
deg^{+}(v_{1})+\cdots+deg^{+}(v_{|S|})=|(S,V(T)\setminus S)_{T}|+|A(T<S>)|\geq2(n-|S|)+e+\frac{|S|-e-p}{2}.
\end{equation}
On the other hand,
\begin{equation}\label{EQ3}
deg^{+}(v_{1})+\cdots+deg^{+}(v_{|S|})\leq n-1.
\end{equation}
The desired lower bound now follows from (\ref{EQ2}) and (\ref{EQ3}).

Let $T\in \Gamma$. Let $S'$ be the set of vertices of the copies $P_{2}$ and $H_{i}$, $1\leq i\leq r'$. It is easy to see that $S'$ is a total $2$-dominating set in $T$ of cardinality $2r+t_{1}+\cdots+t_{r'}$, in which $t_{i}=|V(H_{i})|$ for $1\leq i\leq r'$. Moreover, $n=2r+t_{1}+\cdots+t_{r'}+r+r'-1$ and $e-p=t_{1}+\cdots+t_{r'}-2r'$. So, $\gamma^{t}_{\times2}(T)\leq 2r+t_{1}+\cdots+t_{r'}=(2n+e-p+2)/3$ which implies the equality in the lower bound.

Suppose now that we have the equality in the lower bound of the theorem. Then both the inequalities in (\ref{EQ2}) and (\ref{EQ3}) hold with equality, necessarily. In particular,
$$deg^{+}(v_{1})+\cdots+deg^{+}(v_{|S|})=n-1=\sum_{v\in V(T)}deg^{+}(v)$$
shows that $V(T)\setminus S$ is independent. Moreover, the equalities $|(S,V(T)\setminus S)_{T}|=2(n-|S|)$ and $|A(T\langle S\rangle)|=e+(|S|-e-p)/2$ imply that every vertex in $V(T)\setminus S$ is adjacent from precisely two vertices in $S$ and $T\langle S\rangle$ is a disjoint union of digraphs which are isomorphic to the directed paths $P_{2}$, and the directed stars $H_{t}$ of orders at least three whose end-vertices are not adjacent with the vertices in $V(T)\setminus S$. Choose a vertex $w_{1}\in V(T)\setminus S$. Then $w_{1}$ is adjacent from two vertices in $S$ which do not belong to a same component of $T'=T\langle S\rangle$, for otherwise the underlying graph of $T$ contains a cycle. Since $T$ is connected and its underlying graph has no cycle, there exists a vertex $w_{2}\in V(T)\setminus S$ which is adjacent from exactly one vertex in $N_{T'}[N^{-}(w_{1})]$. In general, by choosing the vertex $w_{j-1}$ in such a way, we find the vertex $w_{j}$ for which exactly one of its two in-neighbors belongs to $N_{T'}[N^{-}(\{w_{i}\}_{i< j})]$. The above argument shows that $T\in \Gamma$.
\end{proof}

\begin{figure}
  \centering
\begin{tikzpicture}[scale=.5, transform shape]
\node [draw, shape=circle] (v11) at  (-12,0) {};
\node [draw, shape=circle] (v12) at  (-10,0) {};
\node [draw, shape=circle] (v21) at  (-7,0) {};
\node [draw, shape=circle] (v22) at  (-5,0) {};
\node [draw, shape=circle] (v31) at  (-2,0) {};
\node [draw, shape=circle] (v32) at  (0,0) {};
\node [draw, shape=circle] (u1) at  (3,0) {};
\node [draw, shape=circle] (u2) at  (6,0) {};
\node [draw, shape=circle] (u11) at  (2,2) {};
\node [draw, shape=circle] (u12) at  (3,2) {};
\node [draw, shape=circle] (u13) at  (4,2) {};
\node [draw, shape=circle] (u21) at  (5.25,2) {};
\node [draw, shape=circle] (u22) at  (6.75,2) {};
\node [draw, shape=circle] (w1) at  (-8,-5) {};
\node [draw, shape=circle] (w2) at  (-5,-5) {};
\node [draw, shape=circle] (w3) at  (-2,-5) {};
\node [draw, shape=circle] (w4) at  (1,-5) {};

\draw[-Latex,line width=0.5pt](v11)--(v12);
\draw[-Latex,line width=0.5pt](v21)--(v22);
\draw[-Latex,line width=0.5pt](v31)--(v32);
\draw[-Latex,line width=0.5pt](u1)--(u11);
\draw[-Latex,line width=0.5pt](u12)--(u1);
\draw[-Latex,line width=0.5pt](u1)--(u13);
\draw[-Latex,line width=0.5pt](u21)--(u2);
\draw[-Latex,line width=0.5pt](u22)--(u2);
\draw[-Latex,line width=0.5pt](v11)--(w1);
\draw[-Latex,line width=0.5pt](v22)--(w1);
\draw[-Latex,line width=0.5pt](v12)--(w2);
\draw[-Latex,line width=0.5pt](u1)--(w2);
\draw[-Latex,line width=0.5pt](v21)--(w3);
\draw[-Latex,line width=0.5pt](v32)--(w3);
\draw[-Latex,line width=0.5pt](u2)--(w4);
\draw[-Latex,line width=0.5pt](v31)--(w4);

\end{tikzpicture}
  \caption{A member of $\Gamma$.}\label{fig1}
\end{figure}
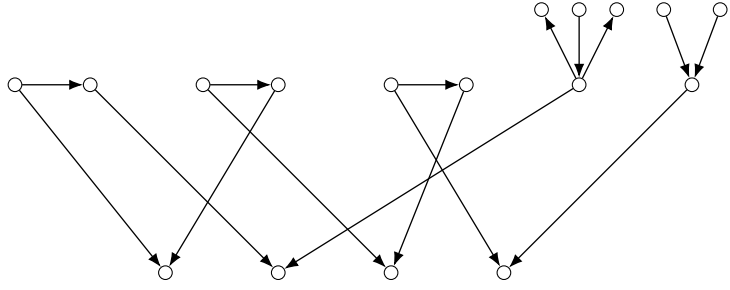


\section{Sum and product of $\Psi(D)$ and $\Psi(D^{-1})$ when $\Psi\in\{\gamma_{\times2}^{t},L_{2}^{t}\}$}

For the rest of the paper, we study the sum and product of the total $2$-domination number and the total $2$-limited packing number of a digraph and its converse. In order to obtain such inequalities concerninig the $2$-limited packing number, we make use of the structures of directed trees. Note that the study of these kinds of inequalities was first presented by Chartrand et. al \cite{chy} for the domination number. Since then bounds on $\Psi(D)+\Psi(D^{-1})$ or $\Psi(D)\Psi(D^{-1})$ appeared in literature, in which $\Psi$ is a digraph parameter. For example, the reader can be referred to the papers \cite{fur}, \cite{hq} and \cite{hqx}.

Nordhaus and Gaddum \cite{ng} in 1956, gave lower and upper bounds on the sum and product of the chromatic numbers of a graph $G$ and its complement $\overline{G}$ in terms of the order of $G$. Since then, bounds on $\Theta(G)+\Theta(\overline{G})$ or $\Theta(G)\Theta(\overline{G})$ are called {\em Nordhaus-Gaddum inequalities}, where $\Theta$ is any graph parameter. The search of the Nordhaus-Gaddum type inequalities has centered the attention of a large number of investigations, and in domination theory, this has probably been even more emphasized. For more information about this subject the reader can consult \cite{ah}.

Indeed, the above-mentioned inequalities concerning digraphs and their converse can be interpreted as modified Nordhaus-Gaddum theorems for digraphs, where the converse of a digraph replaces the complement of a graph.

We begin with these inequalities for the total $2$-domination number.

\begin{proposition}\label{NG1}
Let $D$ be a connected digraph of order $n\geq2$. Then, $\gamma^{t}_{\times2}(D)=n$ if and only if $deg^{-}(v)\leq1$ for each vertex $v$ which is neither a penultimate vertex nor an end vertex.
\end{proposition}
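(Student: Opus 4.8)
The plan is to prove both implications directly, exploiting the fact that, since $D$ is connected with $n\ge2$, it has no isolated vertices and hence $V(D)$ is itself a total $2$-dominating set. Thus $\gamma^{t}_{\times2}(D)=n$ is equivalent to the assertion that \emph{no proper subset} of $V(D)$ is a total $2$-dominating set, and this is the form I would work with. Throughout I would use two structural facts already recorded in the proofs of Theorems \ref{imperative} and \ref{T3}: if $S$ is a total $2$-dominating set and $v\in V(D)\setminus S$ then $deg_S^{-}(v)\ge2$ (so in particular $deg^{-}(v)\ge2$); and every end-vertex and every penultimate vertex lies in every total $2$-dominating set.

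For the forward implication (the in-degree condition forces $\gamma^{t}_{\times2}(D)=n$) I would argue by contradiction. Suppose $S\subsetneq V(D)$ is a total $2$-dominating set and choose $v\in V(D)\setminus S$. Then $deg^{-}(v)\ge deg_S^{-}(v)\ge2$, so by hypothesis $v$ is an end-vertex or a penultimate vertex; the former is impossible since $deg^{+}(v)+deg^{-}(v)\ge2$, so $v$ is penultimate. Let $u$ be an end-vertex adjacent with $v$. As $u$ has exactly one incident arc, its only neighbour is $v$. If $u\notin S$ then $deg^{-}(u)\ge2$, contradicting that $u$ is an end-vertex; hence $u\in S$. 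But then the only neighbour of $u$ lies outside $S$, so $u$ is isolated in $D\langle S\rangle$, contradicting that $S$ is a total $2$-dominating set. This rules out any proper total $2$-dominating set and yields $\gamma^{t}_{\times2}(D)=n$.

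For the converse I would prove the contrapositive: assuming the in-degree condition fails, I exhibit a total $2$-dominating set smaller than $V(D)$. By hypothesis there is a vertex $v_0$ that is neither an end-vertex nor a penultimate vertex and satisfies $deg^{-}(v_0)\ge2$, and the natural candidate is $S=V(D)\setminus\{v_0\}$. The $2$-domination requirement is immediate, since $v_0$ is the only vertex outside $S$ and (there being no loops) all of its at least two in-neighbours remain in $S$, giving $deg_S^{-}(v_0)=deg^{-}(v_0)\ge2$. It then remains to check that $D\langle S\rangle$ has no isolated vertex, i.e.\ that no $w\neq v_0$ has $v_0$ as its sole neighbour; a vertex $w$ whose unique incident arc joined it to $v_0$ would be an end-vertex adjacent with $v_0$, forcing $v_0$ to be penultimate, which is excluded. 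Hence $S$ has no isolated vertex and $\gamma^{t}_{\times2}(D)\le n-1$.

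The step I expect to be the main obstacle is precisely this last verification that deleting $v_0$ isolates no vertex. The dichotomy above handles any neighbour of $v_0$ attached by a single arc, but one must also ensure that no vertex $w$ has $v_0$ as its sole \emph{underlying} neighbour while being joined to $v_0$ by a pair of opposite arcs (a ``digon pendant''): such a $w$ is \emph{not} an end-vertex in the sense $deg^{+}(w)+deg^{-}(w)=1$, so its presence does not immediately force $v_0$ to be penultimate, yet it becomes isolated once $v_0$ is removed. I would resolve this either by adopting the convention that the pendant leaves in question are detected by underlying degree, or by isolating the digon case and noting that such a $w$ has $deg^{-}(w)=1$ and so cannot itself be discarded either; this is the portion of the argument that needs the most care, and it is where a clean statement of the hypothesis really matters.
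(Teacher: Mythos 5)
Your forward implication is correct and is exactly the direction the paper dismisses as ``clear'': any proper total $2$-dominating set $S$ would omit a vertex $v$ with $deg^{-}(v)\geq 2$, hence a penultimate vertex, and the end-vertex $u$ attached to $v$ would then be forced into $S$ but isolated in $D\langle S\rangle$. Your converse also follows the paper's route verbatim: the paper's entire argument for necessity is the one-line assertion that $V(D)\setminus\{v_0\}$ ``can be deduced'' to be a total $2$-dominating set. The ``digon pendant'' obstacle you isolate at the end is therefore not a quibble about conventions: it is precisely the step the paper skips, and it cannot be closed as the statement is written. Your second proposed fix (observing that such a $w$ has $deg^{-}(w)=1$ and so cannot be discarded either) does not help, because the task is to exhibit \emph{some} proper total $2$-dominating set, and noting that $w$ is also undeletable produces none.

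In fact, under the paper's stated definition of an end-vertex ($deg^{+}(v)+deg^{-}(v)=1$) the proposition is false. Take $V(D)=\{w,v_0,a,b\}$ with arcs $(w,v_0)$, $(v_0,w)$, $(a,v_0)$, $(b,v_0)$, $(a,b)$, $(b,a)$. Every vertex has $deg^{+}+deg^{-}\geq 2$, so $D$ has no end-vertices and hence no penultimate vertices, and $v_0$ satisfies $deg^{-}(v_0)=3\geq 2$; the right-hand condition fails. Yet $\gamma^{t}_{\times 2}(D)=n=4$: any total $2$-dominating set omitting a vertex $u$ needs $deg^{-}(u)\geq 2$, and $v_0$ is the only such vertex, but $S=\{w,a,b\}$ leaves $w$ isolated in $D\langle S\rangle$ since its unique neighbour (via the digon) is $v_0$. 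So the only viable repair is your first one: read ``end-vertex'' and ``penultimate vertex'' in the underlying graph (so that $w$ above is a leaf and $v_0$ is its support vertex), under which both your argument and the paper's go through. You were right to flag this as the place ``where a clean statement of the hypothesis really matters''; you should state explicitly that the result requires that amended hypothesis rather than leaving the choice of resolution open.
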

\begin{proof}
The sufficiency of the condition is clear. Now let $\gamma^{t}_{\times2}(D)=n$. Suppose that there exists a vertex $v$ with $deg^{-}(v)\geq2$ which is neither a penultimate vertex nor an end vertex. We can deduce that $V(D)\setminus\{v\}$ is a total $2$-dominating set in $D$, which is impossible.
\end{proof}

As an immediate consequence of Proposition \ref{NG1}, we have the following result.

\begin{corollary}
Let $D$ be a connected digraph of order $n\geq2$. Then, $\gamma^{t}_{\times2}(D)+\gamma^{t}_{\times2}(D^{-1})=2n$ \emph{(}$\gamma^{t}_{\times2}(D)\gamma^{t}_{\times2}(D^{-1})=n^{2}$\emph{)} if and only if $deg^{-}(v),deg^{+}(v)\leq1$ for each vertex $v$ which is neither a penultimate vertex nor an end vertex.
\end{corollary}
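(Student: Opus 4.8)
The plan is to obtain this corollary directly from Proposition~\ref{NG1}, applied simultaneously to $D$ and to its converse $D^{-1}$. The first preliminary observation I would record is that since $D$ is connected of order $n\geq2$ it has no isolated vertices, so $V(D)$ is itself a total $2$-dominating set (there are no vertices outside $V(D)$ to check, and $D\langle V(D)\rangle=D$ has no isolated vertices); hence $\gamma^{t}_{\times2}(D)\leq n$. Because the underlying graph of $D^{-1}$ coincides with that of $D$, the converse $D^{-1}$ is also connected of order $n\geq2$, so likewise $\gamma^{t}_{\times2}(D^{-1})\leq n$.

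Next I would reduce both the sum-equality and the product-equality to the single statement $\gamma^{t}_{\times2}(D)=\gamma^{t}_{\times2}(D^{-1})=n$. Writing $a=\gamma^{t}_{\times2}(D)$ and $b=\gamma^{t}_{\times2}(D^{-1})$, these are positive integers with $a,b\leq n$. Then $a+b\leq 2n$ with equality iff $a=b=n$, and $ab\leq nb\leq n^{2}$ with equality (using $b>0$) iff $a=n$ and $b=n$. Thus both the condition $\gamma^{t}_{\times2}(D)+\gamma^{t}_{\times2}(D^{-1})=2n$ and the condition $\gamma^{t}_{\times2}(D)\gamma^{t}_{\times2}(D^{-1})=n^{2}$ are each equivalent to $\gamma^{t}_{\times2}(D)=\gamma^{t}_{\times2}(D^{-1})=n$.

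Finally I would invoke Proposition~\ref{NG1}. Applied to $D$, it gives $\gamma^{t}_{\times2}(D)=n$ iff $deg^{-}(v)\leq1$ for every vertex $v$ that is neither a penultimate vertex nor an end-vertex. Applied to $D^{-1}$, it gives $\gamma^{t}_{\times2}(D^{-1})=n$ iff $deg^{-}_{D^{-1}}(v)\leq1$ for every such vertex. Here the two crucial bookkeeping facts are that $deg^{-}_{D^{-1}}(v)=deg^{+}_{D}(v)$ for every vertex $v$, and that the notions of end-vertex (defined by $deg^{+}(v)+deg^{-}(v)=1$, which is invariant under arc reversal) and penultimate vertex (adjacency in the underlying graph, which is unchanged by taking the converse) pick out exactly the same vertex class in $D$ and in $D^{-1}$. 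Hence the condition for $D^{-1}$ translates into $deg^{+}(v)\leq1$ for each vertex $v$ that is neither a penultimate vertex nor an end-vertex of $D$, and combining the two conditions yields precisely the stated characterization.

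This is a genuinely immediate consequence, so I do not anticipate a substantial obstacle; the only point requiring a moment of care is the invariance of the end-vertex and penultimate-vertex classes under passing to the converse, together with the identity $deg^{-}_{D^{-1}}=deg^{+}_{D}$, which is exactly what lets the application of Proposition~\ref{NG1} to $D^{-1}$ produce an out-degree condition in $D$.
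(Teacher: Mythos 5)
Your proof is correct and follows exactly the route the paper intends: the paper states this corollary as an immediate consequence of Proposition~\ref{NG1} without writing out the details, and your argument (reducing both the sum and product equalities to $\gamma^{t}_{\times2}(D)=\gamma^{t}_{\times2}(D^{-1})=n$, then applying the proposition to $D$ and $D^{-1}$ while noting that end-vertices and penultimate vertices are preserved under taking the converse and that $deg^{-}_{D^{-1}}=deg^{+}_{D}$) is precisely the omitted verification.
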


We now turn our attention to the total $2$-limited packing number. Let $D$ be a connected digraph of order $n$. Then, $L^{t}_{2}(D)+L^{t}_{2}(D^{-1})=2n$ when $n=1,2$. So, in what follows we may assume that $n\geq3$.

\begin{theorem}\label{T-NG2}
For any connected digraph $D$ of order $n\geq3$,
$$L^{t}_{2}(D)+L^{t}_{2}(D^{-1})\leq\frac{16n}{9}.$$
This bound is sharp.
\end{theorem}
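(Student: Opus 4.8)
The plan is to bound each of $L_2^t(D)$ and $L_2^t(D^{-1})$ individually and then add the bounds, rather than attempting to analyze the sum directly. The key observation is that a total $2$-limited packing $B$ in $D$ satisfies two competing constraints: every vertex of $B$ is adjacent (in $D$) with at most one other vertex of $B$, so the subdigraph induced by $B$ has maximum underlying degree one and hence the number of arcs inside $B$ is at most $|B|/2$; meanwhile every vertex outside $B$ receives at most two arcs \emph{to} $B$. The natural strategy is a double-counting of arcs between $B$ and $V(D)\setminus B$ together with a counting of arcs inside $B$, exactly in the spirit of inequalities~(\ref{Bob1})--(\ref{Bob2}) used to prove Theorem~\ref{123}. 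Since the bound $16n/9$ is not simply twice the individual bound $2n/(\delta^-+1)$ from Theorem~\ref{123}, the improvement must come from combining information about $B$ (a packing in $D$) and $B'$ (a packing in $D^{-1}$, i.e.\ governed by out-neighborhoods in $D$) simultaneously.

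First I would fix $L_2^t(D)$-set $B$ and an $L_2^t(D^{-1})$-set $B'$, and partition $V(D)$ according to membership in $B$ and $B'$. Because the total $2$-limited packing conditions for $D$ and for $D^{-1}$ read off opposite orientations of the same arcs, a vertex lying in $B\cap B'$ is simultaneously constrained to have at most one $D$-neighbor in $B$ and at most one $D^{-1}$-neighbor in $B'$, which is a strong local restriction. The plan is to set $a=|B\cap B'|$, $b=|B\setminus B'|$, $c=|B'\setminus B|$, $d=|V(D)\setminus(B\cup B')|$, so that $L_2^t(D)+L_2^t(D^{-1})=|B|+|B'|=2a+b+c$ and $n=a+b+c+d$. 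Maximizing $2a+b+c$ subject to $n=a+b+c+d$ forces us to control $a$ and $d$ against each other: the factor $16/9$ (rather than $2$) should emerge precisely because vertices counted twice (in $B\cap B'$) are expensive in terms of the arc budget, so they cannot all coincide and must be ``paid for'' by vertices in the remainder.

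The main obstacle, which I expect to be the technical heart of the argument, is extracting the constant $16/9$ from the arc-counting. Concretely, I would count arcs of $D$ between the four parts in two ways: from the $D$-packing side every vertex of $V(D)\setminus B$ absorbs at most two arcs from $B$ and every vertex of $B$ emits at most one arc into $B$, and symmetrically for the $D^{-1}$-packing side via out-degrees. Summing an in-degree-one-type inequality like~(\ref{Bob2}) for $D$ with its mirror image for $D^{-1}$ and feeding in the fact that $D$ is connected (hence has at least $n-1$ arcs, with equality for trees) should yield a linear relation among $a,b,c,d$ that, after optimizing, gives $2a+b+c\le 16n/9$. The delicate point is that the raw packing inequalities only involve $\delta^-\ge 0$, so connectivity must be invoked to get a useful lower bound on the total arc count; I anticipate that the extremal configuration is a tree in which the $B\cap B'$ vertices form matched $P_2$-pairs whose two endpoints each dominate disjoint triples, producing the ratio $9$ in the denominator against the doubled count $16$.

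For sharpness I would exhibit an explicit connected digraph—most plausibly a directed tree built from $P_2$-blocks in the style of the family $\Gamma$ from Theorem~\ref{T3}, or a short directed cycle pattern on $9$ vertices repeated in blocks—for which $L_2^t(D)=L_2^t(D^{-1})=8n/9$ simultaneously, so that the sum attains $16n/9$. Verifying that such a construction is simultaneously near-maximal for both $D$ and $D^{-1}$ is the second obstacle, since a set that packs $D$ tightly need not pack $D^{-1}$ tightly; the construction must be chosen with enough symmetry (for instance reversal-symmetry of the arc pattern) that the same cardinality is achieved in both orientations.
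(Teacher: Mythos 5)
Your reduction of the problem to spanning directed trees (via $L^{t}_{2}(D)\leq L^{t}_{2}(T)$ for a spanning directed tree $T$) coincides with the last step of the paper's argument, and your sharpness idea---blocks in which a center carries pendant directed $P_{2}$'s, two oriented outward and two inward, so that $8$ of every $9$ vertices are packed in both orientations---is essentially the digraph $R$ the paper constructs. But the heart of the theorem is missing from your proposal: you defer the derivation of the constant $16/9$ to an unspecified ``linear relation among $a,b,c,d$ that, after optimizing, gives $2a+b+c\le 16n/9$,'' and the inequalities you actually write down cannot produce such a relation. Concretely, for a directed tree your constraints are: at most $|B|/2$ arcs inside $B$, at most $2(n-|B|)$ arcs from $V(D)\setminus B$ into $B$, the mirror statements for $B'$ in $D^{-1}$, and a total of $n-1$ arcs. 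The arcs you leave unconstrained---those from $B$ into $V(D)\setminus B$, and those induced by $V(D)\setminus B$---can absorb the entire arc budget. For instance, the assignment $|B|=|B'|=n-1$, with $\lfloor (n-1)/2\rfloor$ arcs inside $B$, two arcs into $B$ from the unique outside vertex, and the remaining $\lfloor (n-1)/2\rfloor-2$ arcs directed from $B$ to that vertex, satisfies every inequality on your list yet gives $|B|+|B'|=2n-2>16n/9$ for all $n>9$. What actually forbids this is the combinatorial fact that the underlying graph of $T\langle B\rangle$ must be a matching inside a tree, which propagates structural restrictions that a degree/arc count of the type \eqref{Bob1}--\eqref{Bob2} does not see (indeed \eqref{Bob2} degenerates on trees, which necessarily contain sources). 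The paper supplies this missing content by induction on directed trees: an explicit base case for directed stars (Claim A), a split of $T$ along an arc $(x,y)$ into two subtrees with a case analysis on their orders, and a separate argument (Claim B, examining the five orientations of a six-vertex star) that bounds how many pendant $P_{1}$- and $P_{2}$-components can cluster around a cut vertex and lie in $B\cap B^{-1}$. Nothing in your outline substitutes for that analysis.

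Two smaller points. Your opening sentence announces that you will bound $L^{t}_{2}(D)$ and $L^{t}_{2}(D^{-1})$ individually and add the bounds; no individual bound better than $n-1$ is available (for an in-oriented star $S$ one has $L^{t}_{2}(S^{-1})=n-1$), and the remainder of your proposal rightly abandons this in favour of a joint analysis---the text should be made consistent. And the sharpness construction needs to be written out and verified: the paper's example attaches to each vertex of an arbitrary connected digraph two out-oriented and two in-oriented pendant directed $P_{2}$'s and checks that the $8n'$ pendant vertices form an optimal total $2$-limited packing in both $R$ and $R^{-1}$; your reversal-symmetry heuristic points in the right direction but is not yet a proof that equality is attained.
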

\begin{proof}
We first prove that the inequality holds for all directed stars on $n\geq3$ vertices.\vspace{1mm}\\
\textit{Claim A.} \textit{For any directed star $S$ of order $n\geq3$, $L^{t}_{2}(S)+L^{t}_{2}(S^{-1})\leq16n/9$.}\vspace{1mm}\\
\textit{Proof of Claim A.} Let $u$ be the central vertex of $S$, $deg^{+}(u)=a$ and $deg^{-}(u)=b$. We have,
\begin{equation*}
L^{t}_{2}(S)+L^{t}_{2}(S^{-1})=\left \{
\begin{array}{lll}
a+b+2=n+1, & \mbox{if}\ a=0\ \mbox{or}\ b=0, \vspace{1.5mm}\\
a+b+2=n+1, & \mbox{if}\ a=b=1, \vspace{1.5mm}\\
a+b+3=n+2, & \mbox{if}\ min\{a,b\}=1\ \mbox{and}\ max\{a,b\}\geq2, \vspace{1.5mm}\\
a+b+4=n+3, & \mbox{if}\ a,b\geq2.
\end{array}
\right.
\end{equation*}
We now have $L^{t}_{2}(S)+L^{t}_{2}(S^{-1})\leq16n/9$ in all four possible values for $L^{t}_{2}(S)+L^{t}_{2}(S^{-1})$. ($\square$)

We are now able to extend the inequality in Claim A to directed trees on at least three vertices as follows. Indeed, we prove that
\begin{equation}\label{NGT}
L^{t}_{2}(T)+L^{t}_{2}(T^{-1})\leq16n/9,
\end{equation} by induction on the order $n\geq3$ of directed tree $T$. If $n=3$, then the result follows from Claim A. Suppose now that the result is true for all directed trees $T'$ of order $3\leq n'<n$. Let $T$ be a directed tree of order $n$. If $T$ is a directed star, then the result again follows by Claim A. So, we assume that $T$ is not a directed star. Therefore, $T$ has an arc $(x,y)$ such that $T-(x,y)$ is isomorphic to two non-trivial directed trees $T_{1}$ and $T_{2}$ of order $n_{1}<n$ and $n_{2}<n$, respectively. Moreover, by the symmetry between $T$ and $T^{-1}$, we may assume that $x\in V(T_{1})$ and $y\in V(T_{2})$. If $n_{1}=n_{2}=2$, then $T$ is obtained from orienting the edges of a path on four vertices. It follows that $L^{t}_{2}(T)+L^{t}_{2}(T^{-1})\leq6\leq 16(4)/9$. So, in what follows we may assume that $n_{1}\geq3$. We now distinguish two cases depending on $n_{2}$.

\textit{Case 1.} $n_{2}\geq3$. By the induction hypothesis, we have $L^{t}_{2}(T_{1})+L^{t}_{2}(T_{1}^{-1})\leq16n_{1}/9$ and $L^{t}_{2}(T_{2})+L^{t}_{2}(T_{2}^{-1})\leq16n_{2}/9$. The inequality (\ref{NGT}) now follows from the fact that $L^{t}_{2}(T)\leq L^{t}_{2}(T_{1})+L^{t}_{2}(T_{2})$ and $L^{t}_{2}(T^{-1})\leq L^{t}_{2}(T_{1}^{-1})+L^{t}_{2}(T_{2}^{-1})$.

\textit{Case 2.} $n_{2}=2$. Let $V(T_{2})=\{y,z\}$. We now consider two subcases.

\textit{Subcase 2.1.} Suppose that there exist a $L^{t}_{2}(T)$-set $B$ and a $L^{t}_{2}(T^{-1})$-set $B^{-1}$ such that $\{y,z\}\nsubseteq B\cap B^{-1}$. By the induction hypothesis, we have $L^{t}_{2}(T_{1})+L^{t}_{2}(T_{1}^{-1})\leq16n_{1}/9=16(n-2)/9$. Therefore,
$$L^{t}_{2}(T)+L^{t}_{2}(T^{-1})\leq L^{t}_{2}(T_{1})+|B\cap\{y,z\}|+L^{t}_{2}(T_{1}^{-1})+|B^{-1}\cap\{y,z\}|\leq \frac{16(n-2)}{9}+3<\frac{16n}{9}.$$

\textit{Subcase 2.2.} Suppose now that $\{y,z\}\subseteq B\cap B^{-1}$ for all $L^{t}_{2}(T)$-sets $B$ and $L^{t}_{2}(T^{-1})$-sets $B^{-1}$. This implies that $x\notin B\cup B^{-1}$. We have, $T_{1}=T-\{y,z\}$. Suppose that $T_{11},\cdots,T_{1p}$ are the components of $T_{1}-x$. If $|V(T_{11})|,\cdots,|V(T_{1p})|\geq3$, we have $L^{t}_{2}(T_{1i})+L^{t}_{2}(T_{1i}^{-1})\leq16|V(T_{1i})|/9$ for all $1\leq i\leq p$, by the induction hypothesis. Therefore,
\begin{equation*}
\begin{array}{lcl}
L^{t}_{2}(T)+L^{t}_{2}(T^{-1})&\leq& \sum_{i=1}^{p}(L^{t}_{2}(T_{1i})+L^{t}_{2}(T_{1i}^{-1}))+|B\cap\{x,y,z\}|+|B^{-1}\cap\{x,y,z\}|\\
&\leq& \frac{16}{9}\sum_{i=1}^{p}|V(T_{1i})|+4=\frac{16}{9}(n-3)+4<\frac{16n}{9}.
\end{array}
\end{equation*}
So, we assume that $|V(T_{1i})|\leq2$ for some $1\leq i\leq p$. Without loss of generality, we assume that $|V(T_{11})|,\cdots,|V(T_{1q})|\leq2$ for $1\leq q\leq p$. By the induction hypothesis, we have
\begin{equation*}\label{EQ4}
L^{t}_{2}(T_{1i})+L^{t}_{2}(T_{1i}^{-1})\leq16|V(T_{1i})|/9,
\end{equation*}
for all $q+1\leq i\leq p$ (if there is such an index $i$).

Let $x_{1},\cdots,x_{q}$ be the unique vertices in $V(T_{11}),\cdots,V(T_{1q})$, respectively, which are adjacent with $x$. We now present the following claim.\vspace{1mm}\\
\textit{Claim B.} \textit{If $q\geq4$, then at least one of the vertices $x_{1},\cdots,x_{q}$ does not belong to $B\cap B^{-1}$.}\vspace{1mm}\\
\textit{Proof of Claim B.} Suppose that $q\geq4$. Suppose to the contrary that $x_{1},\cdots,x_{q}\in B\cap B^{-1}$. Since $T$ is a directed tree, the subdigraph $H$ induced by $\{y,x,x_{1},\cdots,x_{4}\}$ is a directed star on six vertices. In fact, $H$ is isomorphic to one of the directed stars depicted in Figure \ref{fig0}. In the first three directed stars (from left to right) we have contradiction with the fact that $B$ is a $2$-limited packing in $D$, and in the last two directed stars we have contradiction with the fact that $B^{-1}$ is a $2$-limited packing in $D^{-1}$. Thus, at least one of the vertices $x_{1},\cdots,x_{q}$ does not belong to $B\cap B^{-1}$. ($\square$)

Let $q\geq4$. We now assume, without loss of generality, that $x_{1}\notin B\cap B^{-1}$. If $V(T_{11})=\{x_{1}\}$, then
\begin{equation*}\label{EQ5}
\begin{array}{lcl}
L^{t}_{2}(T)+L^{t}_{2}(T^{-1})&\leq&|B\cap\{y,z\}|+|B^{-1}\cap\{y,z\}|+|B\cap\{x_{1}\}|+|B^{-1}\cap\{x_{1}\}|\\
&+&|B\cap V(T-\{y,z,x_{1}\})|+|B^{-1}\cap V(T^{-1}-\{y,z,x_{1}\})|\\
&\leq& 4+1+L^{t}_{2}(T-\{y,z,x_{1}\})+L^{t}_{2}(T^{-1}-\{y,z,x_{1}\})\\
&\leq& 5+16(n-3)/9<16n/9.
\end{array}
\end{equation*}
If $V(T_{11})=\{x_{1},x'_{1}\}$ for some vertex $x'_{1}$, then
\begin{equation*}\label{EQ6}
\begin{array}{lcl}
L^{t}_{2}(T)+L^{t}_{2}(T^{-1})&\leq&4+|B\cap\{x_{1},x'_{1}\}|+|B^{-1}\cap\{x_{1},x'_{1}\}|+|B\cap V(T-\{y,z,x_{1},x'_{1}\})|\\
&+&|B^{-1}\cap V(T^{-1}-\{y,z,x_{1},x'_{1}\})|\\
&\leq& 4+3+L^{t}_{2}(T-\{y,z,x_{1},x'_{1}\})+L^{t}_{2}(T^{-1}-\{y,z,x_{1},x'_{1}\})\\
&\leq& 7+16(n-4)/9<16n/9.
\end{array}
\end{equation*}

It remains for us to prove the desired inequality when $q\leq3$. We consider two subcases depending on $p$ and $q$.

\textit{Subcase 2.2.1.} Let $p=q$. If $q=1$, then we deal with a directed tree whose underlying graph is a path on four or five vertices. Clearly, in such cases the desired inequality holds. If $q=2$ or $3$, then the directed tree $T$ is obtained from the orientation of a tree of order $n\in\{5,6,7\}$ or $n\in\{6,7,8,9\}$, respectively. In all the possible cases, we have $L^{t}_{2}(T)+L^{t}_{2}(T^{-1})\leq16n/9$.

\textit{Subcase 2.2.2. $q<p$.} Let $x_{p}$ be the unique vertex of $T_{1p}$ which is adjacent with $x$. Since both $T_{1p}$ and $T-V(T_{1p})$ have at least three vertices, it follows by the induction hypothesis that $L^{t}_{2}(T_{1p})+L^{t}_{2}(T_{1p}^{-1})\leq16|V(T_{1p})|/9$ and $L^{t}_{2}(T-V(T_{1p}))+L^{t}_{2}(T^{-1}-V(T_{1p}))\leq16(n-|V(T_{1p})|)/9$. Therefore,
\begin{equation*}\label{EQ7}
L^{t}_{2}(T)+L^{t}_{2}(T^{-1})\leq L^{t}_{2}(T_{1p})+L^{t}_{2}(T-V(T_{1p}))+L^{t}_{2}(T_{1p}^{-1})+L^{t}_{2}(T^{-1}-V(T_{1p}))\leq\frac{16n}{9}.
\end{equation*}
Indeed, in all possible cases we have proved the inequality (\ref{NGT}) for a directed tree $T$.

Since $D$ is connected, it has a spanning directed tree $T$. Moreover. $L^{t}_{2}(D)\leq L^{t}_{2}(T)$ and $L^{t}_{2}(D^{-1})\leq L^{t}_{2}(T^{-1})$. We now have
\begin{equation*}
L^{t}_{2}(D)+L^{t}_{2}(D^{-1})\leq L^{t}_{2}(T)+L^{t}_{2}(T^{-1})\leq\frac{16n}{9}
\end{equation*}
by (\ref{NGT}), as desired.

In what follows, we show that the upper bound is sharp. Let $D'$ be an arbitrary connected digraph on the set of vertices $V(D')=\{v_{1},\cdots,v_{n'}\}$. For every $1\leq i\leq n'$, we add four directed paths $P_{i1}:x^{i}_{11},x^{i}_{12}$, $P_{i2}:x^{i}_{21},x^{i}_{22}$, $P_{i3}:x^{i}_{31},x^{i}_{32}$ and $P_{i4}:x^{i}_{41},x^{i}_{42}$, and four arcs $(v_{i},x^{i}_{11}),(v_{i},x^{i}_{21}),(x^{i}_{31},v_{i}),(x^{i}_{41},v_{i})$. Let $R$ be the obtained digraph. It is easy to observe that $|V(R)|=9n'$ and $B=\{x^{i}_{11},x^{i}_{12},\cdots,x^{i}_{41},x^{i}_{42}\}_{i=1}^{n'}$ is both a $L^{t}_{2}(R)$-set and a $L^{t}_{2}(R^{-1})$-set. Thus, $L^{t}_{2}(R)+L^{t}_{2}(R^{-1})=16n'=16|V(R)|/9$. This completes the proof.

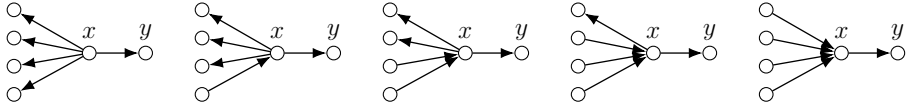
\begin{figure}
  \centering
\begin{tikzpicture}[scale=.5, transform shape]
\node [draw, shape=circle] (y) at  (-8.5,3.35) {};
\node [draw, shape=circle] (x) at  (-10,3.35) {};
\node [draw, shape=circle] (x1) at  (-12,4.5) {};
\node [draw, shape=circle] (x2) at  (-12,3.75) {};
\node [draw, shape=circle] (x3) at  (-12,3) {};
\node [draw, shape=circle] (x4) at  (-12,2.25) {};

\draw[-Latex,line width=0.5pt](x)--(y);
\draw[-Latex,line width=0.5pt](x)--(x1);
\draw[-Latex,line width=0.5pt](x)--(x2);
\draw[-Latex,line width=0.5pt](x)--(x3);
\draw[-Latex,line width=0.5pt](x)--(x4);

\node [scale=1.6] at (-10,3.9) {\large $x$};
\node [scale=1.6] at (-8.5,3.9) {\large $y$};
\node [draw, shape=circle] (b) at  (-3.5,3.35) {};
\node [draw, shape=circle] (a) at  (-5,3.35) {};
\node [draw, shape=circle] (a1) at  (-7,4.5) {};
\node [draw, shape=circle] (a2) at  (-7,3.75) {};
\node [draw, shape=circle] (a3) at  (-7,3) {};
\node [draw, shape=circle] (a4) at  (-7,2.25) {};

\draw[-Latex,line width=0.5pt](a)--(b);
\draw[-Latex,line width=0.5pt](a)--(a1);
\draw[-Latex,line width=0.5pt](a)--(a2);
\draw[-Latex,line width=0.5pt](a)--(a3);
\draw[-Latex,line width=0.5pt](a4)--(a);

\node [scale=1.6] at (-5,3.9) {\large $x$};
\node [scale=1.6] at (-3.5,3.9) {\large $y$};
\node [draw, shape=circle] (d) at  (1.5,3.35) {};
\node [draw, shape=circle] (c) at  (0,3.35) {};
\node [draw, shape=circle] (c1) at  (-2,4.5) {};
\node [draw, shape=circle] (c2) at  (-2,3.75) {};
\node [draw, shape=circle] (c3) at  (-2,3) {};
\node [draw, shape=circle] (c4) at  (-2,2.25) {};

\draw[-Latex,line width=0.5pt](c)--(d);
\draw[-Latex,line width=0.5pt](c)--(c1);
\draw[-Latex,line width=0.5pt](c)--(c2);
\draw[-Latex,line width=0.5pt](c3)--(c);
\draw[-Latex,line width=0.5pt](c4)--(c);

\node [scale=1.6] at (0,3.9) {\large $x$};
\node [scale=1.6] at (1.5,3.9) {\large $y$};
\node [draw, shape=circle] (f) at  (6.5,3.35) {};
\node [draw, shape=circle] (e) at  (5,3.35) {};
\node [draw, shape=circle] (e1) at  (3,4.5) {};
\node [draw, shape=circle] (e2) at  (3,3.75) {};
\node [draw, shape=circle] (e3) at  (3,3) {};
\node [draw, shape=circle] (e4) at  (3,2.25) {};

\draw[-Latex,line width=0.5pt](e)--(f);
\draw[-Latex,line width=0.5pt](e)--(e1);
\draw[-Latex,line width=0.5pt](e2)--(e);
\draw[-Latex,line width=0.5pt](e3)--(e);
\draw[-Latex,line width=0.5pt](e4)--(e);

\node [scale=1.6] at (5,3.9) {\large $x$};
\node [scale=1.6] at (6.5,3.9) {\large $y$};
\node [draw, shape=circle] (h) at  (11.5,3.35) {};
\node [draw, shape=circle] (g) at  (10,3.35) {};
\node [draw, shape=circle] (g1) at  (8,4.5) {};
\node [draw, shape=circle] (g2) at  (8,3.75) {};
\node [draw, shape=circle] (g3) at  (8,3) {};
\node [draw, shape=circle] (g4) at  (8,2.25) {};

\draw[-Latex,line width=0.5pt](g)--(h);
\draw[-Latex,line width=0.5pt](g1)--(g);
\draw[-Latex,line width=0.5pt](g2)--(g);
\draw[-Latex,line width=0.5pt](g3)--(g);
\draw[-Latex,line width=0.5pt](g4)--(g);

\node [scale=1.6] at (10,3.9) {\large $x$};
\node [scale=1.6] at (11.5,3.9) {\large $y$};

\end{tikzpicture}
\caption{All the possible directed stars on six vertices with the center $x$ and fixed arc $(x,y)$.}\label{fig0}
\end{figure}
\end{proof}

Maximizing $L^{t}_{2}(D)L^{t}_{2}(D^{-1})$ subject to $L^{t}_{2}(D)+L^{t}_{2}(D^{-1})=16n/9$, we have $L^{t}_{2}(D)=L^{t}_{2}(D^{-1})=8n/9$. Therefore, we have the following upper bound for the product of $L^{t}_{2}(D)$ and $L^{t}_{2}(D^{-1})$. Furthermore, the bound is sharp for the digraph $R$ defined in the proof of Theorem \ref{T-NG2}.

\begin{corollary}
For any connected digraph $D$ of order $n\geq3$, $L^{t}_{2}(D)L^{t}_{2}(D^{-1})\leq64n^{2}/81$.
\end{corollary}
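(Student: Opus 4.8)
The plan is to derive this as a one-line consequence of Theorem \ref{T-NG2} via the arithmetic--geometric mean inequality. Write $a=L^{t}_{2}(D)$ and $b=L^{t}_{2}(D^{-1})$, which are nonnegative real numbers (in fact nonnegative integers). Theorem \ref{T-NG2} supplies the sum bound $a+b\leq 16n/9$, and the goal is to convert a bound on the sum into a bound on the product.

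First I would recall the elementary inequality $ab\leq\left(\frac{a+b}{2}\right)^{2}$, which is equivalent to $(a-b)^{2}\geq0$ and so holds for all reals. This immediately reduces the problem to bounding $(a+b)/2$. The second step is to observe that the function $t\mapsto(t/2)^{2}$ is monotone increasing on $[0,\infty)$; since $a+b\geq0$ and $a+b\leq 16n/9$ by Theorem \ref{T-NG2}, I may substitute the larger value of the sum into the square to get
$$
L^{t}_{2}(D)\,L^{t}_{2}(D^{-1})=ab\leq\left(\frac{a+b}{2}\right)^{2}\leq\left(\frac{16n/9}{2}\right)^{2}=\left(\frac{8n}{9}\right)^{2}=\frac{64n^{2}}{81},
$$
which is exactly the claimed bound.

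There is essentially no obstacle here: the entire content is carried by Theorem \ref{T-NG2}, and the remaining argument is the standard ``fixed sum maximizes the product at equality'' principle already flagged in the sentence preceding the corollary. The one point to state carefully is that the hypothesis is a sum \emph{inequality} rather than an equality, but the monotonicity of the squaring function on the nonnegative reals means the worst case still occurs at the boundary $a+b=16n/9$, so no loss is incurred. Finally, I would note that the bound is sharp: for the digraph $R$ constructed in the proof of Theorem \ref{T-NG2} one has $L^{t}_{2}(R)=L^{t}_{2}(R^{-1})=8|V(R)|/9$, whence $L^{t}_{2}(R)\,L^{t}_{2}(R^{-1})=64|V(R)|^{2}/81$, attaining equality.
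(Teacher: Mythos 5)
Your proof is correct and follows essentially the same route as the paper: the authors also deduce the corollary from Theorem \ref{T-NG2} by noting that a fixed upper bound on the sum $L^{t}_{2}(D)+L^{t}_{2}(D^{-1})\leq 16n/9$ forces the product to be maximized when both terms equal $8n/9$, which is just the AM--GM step you spelled out. Your sharpness remark about the digraph $R$ also matches the paper's.
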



\section{Concluding remarks}

Given the integers $k,t\in\{1,2\}$, we proved that $L_{k}(D)\leq \frac{k}{t}\gamma_{\times t}(D)$ for all digraphs $D$ with $\delta^{-}(D)\geq t-1$. Note that the digraph parameters $L_{2}^{t}$ and $\gamma_{\times2}^{t}$ are the same for both the complete biorientation $\overleftrightarrow{K_{n}}$ of the complete graph $K_{n}$ of order $n\geq2$ and the digraph $R$ introduced in the proof of Theorem \ref{T-NG2}. Moreover, we proved that $L_{2}^{t}(T)\leq \gamma_{\times2}^{t}(T)$ for all nontrivial directed tree $T$. So, it is natural to present the following open problem.\vspace{1mm}\\
\textbf{Problem $1$.} Does the inequality $L_{2}^{t}(D)\leq \gamma_{\times2}^{t}(D)$ hold for any digraph $D$ with no isolated vertices?\vspace{1mm}

Mojdeh et al. \cite{msy} proved that $\rho(T)=\gamma(T)$, for all directed tree $T$. Although such a result does not hold for $L^{t}_{2}(T)$ and $\gamma^{t}_{\times2}(T)$, one can ask for the family of all directed trees for which these two parameters are the same. Indeed, we pose the following problem.\vspace{1mm}\\
\textbf{Problem $2$.} Characterize the directed trees $T$ for which $L^{t}_{2}(T)=\gamma^{t}_{\times2}(T)$.\\

\textbf{Acknowledgements}
This research work has been supported by a research grant from the University of Mazandaran.


\begin{thebibliography}{}

\bibitem {ah} M. Aouchiche and P. Hansen, {\em A survey of Nordhaus-Gaddum type relations}, Discrete Appl. Math. {\bf 161} (2013), 466--546.
\bibitem {ajv} S. Arumugam, K. Jacob and L. Volkmann, {\em Total and connected domination in digraphs}, Australas. J. Combin. {\bf 39} (2007), 283--292.
\bibitem {bbg} P.N. Balister, B. Bollobas and K. Gunderson, {\em Limited packings of closed neighbourhoods in graphs}, arXiv: 1501.01833v1 [math.CO] 8 Jan 2015.
\bibitem {bg} J. Bang-Jensen and G. Gutin, Digraphs: Theory, Algorithms and Applications, Springer Monographs in Mathematics, Springer-Verlag London Ltd. London (2007).
\bibitem {b} C. Berge, Theory of Graphs and its Applications, Methuen, London, 1962.
\bibitem {chy} G. Chartrand, F. Harary and B. Quan Yue, {\em On the out-domination and in-domination numbers of a digraph}, Discrete Math. {\bf 197}/{\bf 198} (1999), 179--183.
\bibitem {cg} N.E. Clarke and R.P. Gallant, {\em On $2$-limited packings of complete grid graphs}, Discrete Math. {\bf 340} (2017), 1705--1715.
\bibitem {fj1} J.F. Fink and M.S. Jacobson, {\em $n$-domination in graphs}, in: Graph Theory with Applications to Algorithms and Computer Science, John Wiley and Sons, (1985), 282--300.
\bibitem {fj2} J.F. Fink and M.S. Jacobson, {\em On $n$-domination, $n$-dependence and forbidden subgraphs}, in: Graph Theory with Applications to Algorithms and Computer Science, John Wiley and Sons, New York, (1985), 301--311.
\bibitem {fu} Y. Fu, {\em Dominating set and converse dominating set of a directed graph}, Am. Math. Mon. {\bf 75} (1968), 861--863.
\bibitem {fur} M. Furuya, {\em Bounds on the domination number of a digraph and its reverse}, Filomat, {\bf 32} (2018), 2517--2524.
\bibitem {gghr} R. Gallant, G. Gunther, B.L. Hartnell and D.F. Rall, {\em Limited packing in graphs}, Discrete Appl. Math. {\bf 158} (2010), 1357--1364.
\bibitem {packing} D.S. Hochbaum and D.B. Schmoys, {\em A best possible heuristic for the $k$-center problem}, Math. Oper. Res. {\bf 10} (1985), 180--184.
\bibitem {hq} G. Hao and J. Qian, {\em On the sum of out-domination number and in-domination number of digraphs}, Ars Combin. {\bf 119} (2015), 331--337.
\bibitem {hqx} G. Hao, J. Qian and Z. Xie, {\em On the sum of the total domination numbers of a digraph and its converse}, Quaest. Math. {\bf 42} (2019), 47--57.
\bibitem {h} F. Harary, {\em The Number of Functional Digraphs}, Math. Annalen, {\bf 138} (1959), 203--210.
\bibitem {hh} F. Harary and T.W. Haynes, {\em Double domination in graphs}, Ars Combin. {\bf 55} (2000), 201--213.
\bibitem {hnc} F. Harary, R.Z. Norman and D. Cartwright, Structural Models, Wiley, New York, 1965.
\bibitem {hhs2} T.W. Haynes, S.T. Hedetniemi and P.J. Slater, Fundamentals of Domination in Graphs, Marcel Dekker, New York, 1998.
\bibitem{l1} C. Lee, {\em Domination in digraphs}, J. Korean Math. Soc. {\bf 35} (1998), 843--853.
\bibitem {msy} D.A. Mojdeh, B. Samadi and I.G. Yero, {\em Packing and domination parameters in digraphs}, Discrete Appl. Math., doi: 10.1016/j.dam.2019.04.008.
\bibitem {ng} E.A. Nordhaus and J. Gaddum, {\em On complementary graphs}, Amer. Math. Monthly, {\bf 63} (1956), 175--177.
\bibitem {O} O. Ore, Theory of Graphs, Amer. Math. Soc. Colloq. Publ., {\bf 38} (Amer. Math. Soc., Providence, RI), 1962.
\bibitem {obb} L. Ouldrabah, M. Blidia and A. Bouchou, {\em On the $k$-domination number of digraphs}, J. Combin. Optim., doi: 10.1007/s10878-019-00405-1.
\bibitem {we} D.B. West, Introduction to Graph Theory (Second Edition), Prentice Hall, USA, 2001.

\end{thebibliography}
\end{document}